\documentclass[a4paper,10pt]{amsart}

\usepackage{amsmath, amssymb}
\usepackage[all]{xy}
\usepackage{color}
\usepackage{bm}

\newtheorem{thm}{Theorem}[section]
\newtheorem{prop}[thm]{Proposition}
\newtheorem{lem}[thm]{Lemma}
\newtheorem{cor}[thm]{Corollary}

\newtheorem*{thmA}{Theorem A}

\theoremstyle{definition}
\newtheorem{defn}[thm]{Definition}
\newtheorem{ex}[thm]{Example}

\theoremstyle{remark}
\newtheorem{remark}[thm]{Remark}

\newcommand{\map}{\operatorname{map}}
\newcommand{\aut}{\operatorname{aut}}
\newcommand{\Der}{\operatorname{Der}}
\newcommand{\Hom}{\operatorname{Hom}}
\newcommand{\ev}{\operatorname{ev}}
\newcommand{\ad}{\operatorname{ad}}
\newcommand{\id}{\operatorname{id}}
\newcommand{\mpair}[2]{\langle \hspace{-0.2em} \langle #1, #2 \rangle \hspace{-0.2em} \rangle}

\numberwithin{equation}{section}

\title[Higher order Whitehead products and defining systems]
{Rational higher order Whitehead products of mapping spaces and defining systems}
\author{
Takahito Naito
}
\date{}
\address{
Nippon Institute of Technology,
4-1 Gakuendai, Miyashiro-machi, Minamisaitama-gun, Saitama, 345-8501 Japan
}
\email{naito.takahito@nit.ac.jp}
\keywords{Rational homotopy theory, mapping spaces, higher order Whitehead products, Sullivan models, derivation complexes, defining systems}
\subjclass[2020]{Primary 55P62; Secondary 55Q15, 54C35}

\begin{document}

\begin{abstract}
This paper gives an algebraic description of rational higher order Whitehead products in mapping spaces.
Given a Sullivan representative of the base map, we introduce defining systems in the corresponding derivation complex.
The main result shows that, under the standard identification between rational homotopy groups of
mapping spaces and the cohomology of the derivation complex, rational higher order Whitehead products are given exactly by the cohomology classes represented by the associated derivation brackets. 
The examples show how choices of defining systems produce non-trivial elements and indeterminacy.
\end{abstract}

\maketitle

\section{Introduction}

Higher order Whitehead products were introduced by Porter \cite{Po1965} as higher
homotopy operations which generalize the ordinary Whitehead product. For
$p>2$, a $p$-th order product is non-empty precisely when all lower order
Whitehead products associated with proper subcollections vanish.
These products appear naturally in the study of attaching maps, for instance in
toric topology and polyhedral products \cite{GT2010,IK2020}.
This motivates the question of how higher order Whitehead products can be described in rational homotopy theory.

Andrews and Arkowitz studied this problem using minimal Sullivan models
\cite{AA1978}. Their work showed that higher order Whitehead products can be detected
from the higher order terms in the differential of a minimal Sullivan model.
This result extends the standard Sullivan-model description of the ordinary
Whitehead product by the quadratic part of the differential
\cite[\S 13(e)]{FHT}, and gives an algebraic description of higher order
Whitehead products in terms of rational models.

The rational homotopy theory of mapping spaces has also been studied using
Sullivan models.
In particular, Brown and Szczarba \cite{BS1997} constructed a model for
mapping spaces.
Hirato, Kuribayashi and Oda \cite{HKO2008} used Brown--Szczarba
models to study rational evaluation subgroups, and related non-Gottlieb elements
to higher order Whitehead products.
Buijs and Murillo  \cite{BM2008} described the rational homotopy Lie algebra of mapping spaces in terms of derivation complexes and gave derivation brackets detecting Whitehead products and higher order
Whitehead products.
Later, Buijs, F{\'e}lix and Murillo \cite{BFM2012} constructed explicit $L_\infty$ models for components of free and pointed
mapping spaces. There are also related results of Belch{\'i},
Buijs, Moreno-Fern{\'a}ndez and Murillo \cite{BBMM2017} on higher order Whitehead products
and higher brackets in transferred $L_\infty$ structures.

In this paper, defining systems in a derivation complex are used to study higher order Whitehead products in mapping spaces. The starting point is the definition of higher order Whitehead products in terms of extensions over the fat wedge. 
More precisely, a Sullivan representative of such an extension is written with respect to the natural basis of the cohomology of the fat wedge.
Its coefficients then give derivations satisfying the differential identities
required for a defining system.

To obtain these identities, the Sullivan representative is first replaced by a homotopic strict representative. 
This strict representative allows us to compare the commutative differential graded algebra (CDGA) condition with the brackets in the derivation complex.
As a consequence, the associated bracket of the defining system represents the corresponding rational
higher order Whitehead product in the mapping space.
This result extends the author's previous description of the ordinary Whitehead product to higher order
Whitehead products \cite{Na2014}.

Throughout the paper, we use the standard terminology and notation of rational homotopy theory, including CDGAs, Sullivan models, and Sullivan representatives,
as in F{\'e}lix--Halperin--Thomas \cite{FHT}. The notation
$\map(X,Y;f)$ denotes the path component of the mapping space $\map(X,Y)$
containing $f:X\to Y$, with basepoint $f$.
For a map $f:X\to Y$, let $(\wedge V,d)$ be a minimal Sullivan model for
$Y$, $(A,d_A)$ a CDGA model for $X$, and
$
        \phi:(\wedge V,d)\to(A,d_A)
$
a Sullivan representative for $f$.
We write $\Der(\wedge V,A;\phi)$ for the cochain complex of $\phi$-derivations; see Section~4 for its definition.

Before stating the main theorem, we recall the standard identification of the
rational homotopy groups of mapping spaces with the cohomology of a derivation
complex. By Lupton and Smith~\cite[Theorem~2.1]{LS2007}, this identification is
given, in our grading, by
\[
        \Phi:\pi_n(\map(X,Y;f))\otimes\mathbb{Q}
        \cong
        H^{-n}(\Der(\wedge V,A;\phi)).
\]
Here rational higher order Whitehead products are understood in the
rationalization of the mapping-space component; see Section~5 for details.

\begin{thmA}
Let $X$ be a simply connected finite CW complex, $Y$ a simply connected
space of finite type. Let $(\wedge V,d)$ be a minimal Sullivan model for $Y$,
$(A,d_A)$ a CDGA model for $X$, and
$
        \phi:(\wedge V,d)\to (A,d_A)
$
a Sullivan representative for a map $f:X\to Y$.
Let $p\geq2$. Suppose that integers $n_i\geq2$, classes
$
        \alpha_i\in
        \pi_{n_i}(\map(X,Y;f))\otimes\mathbb Q,
$
and cycles
$
        \theta_i\in\Der^{-n_i}(\wedge V,A;\phi)
$
representing $\Phi(\alpha_i)$ are given for $i=1,\ldots,p$.

If the higher order Whitehead product
\[
        [\alpha_1,\ldots,\alpha_p]_{\mathrm{Wh}}
\]
is non-empty, then its image under $\Phi$ is exactly the set of cohomology
classes represented by
\[
        [\theta_1,\ldots,\theta_p]_\Theta,
\]
where $\Theta$ runs over all defining systems for
$\theta_1,\ldots,\theta_p$.
\end{thmA}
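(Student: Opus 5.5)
We prove both inclusions by translating extensions over the fat wedge into Sullivan representatives and reading off their coefficients as derivations. Write $N=n_1+\cdots+n_p$, $T=T(S^{n_1},\ldots,S^{n_p})$ for the fat wedge and $P=S^{n_1}\times\cdots\times S^{n_p}$. By adjunction, a map $g\colon T\to\map(X,Y;f)$ restricting to representatives of the $\alpha_i$ corresponds to a map $\tilde g\colon X\times T\to Y$ extending $f$ on $X\times *$. The product $[\alpha_1,\ldots,\alpha_p]_{\mathrm{Wh}}$ consists of the classes $g\circ\omega$, where $\omega\colon S^{N-1}\to T$ is the universal attaching map of the top cell of $P$.

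We model $T$ by the CDGA $H^*(T;\mathbb Q)$ with zero differential. It has basis $e_I=\prod_{i\in I}x_i$ for proper subsets $I\subsetneq\{1,\ldots,p\}$, where $|x_i|=n_i$; the space $T$ is formal, so this is legitimate. A Sullivan representative of $\tilde g$ is then a CDGA map
\[
\psi\colon(\wedge V,d)\to A\otimes H^*(T),\qquad \psi(v)=\sum_I \theta_I(v)\otimes e_I^{\vee\text{-dual}},
\]
with $\theta_\emptyset=\phi$. Each coefficient $\theta_I$ is a $\phi$-derivation of degree $-|e_I|$, and $\theta_{\{i\}}$ represents $\Phi(\alpha_i)$.

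Expanding $\psi(vw)=\psi(v)\psi(w)$ and $d_A\psi=\psi d$ and comparing coefficients of $e_I$ gives a family of identities. These express $D(\theta_I)$ as the sum over decompositions of $I$ of the bracket-type terms, built from products of the $\theta_J$ for smaller $J$. This is precisely the defining system condition of Section~4, so $\Theta=\{\theta_I\}$ is a defining system.

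The step I expect to be the main obstacle is that an arbitrary Sullivan representative is only multiplicative up to the chosen model. The coefficient maps need not be derivations, and the $\theta_{\{i\}}$ need not equal the prescribed cycles $\theta_i$. Following the strategy of the introduction, I would first replace $\psi$ by a homotopic \emph{strict} representative. Working inductively on $|I|$, and using the relative lifting property of Sullivan algebras against the surjection $A\otimes\wedge(t,dt)\otimes H^*(T)\to A\otimes H^*(T)$, I would adjust $\psi$ by homotopies supported on the cells indexed by $I$. This makes $\theta_{\{i\}}=\theta_i$ exactly, which is possible because $\theta_{\{i\}}-\theta_i$ is a boundary. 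At the same time it keeps the lower coefficients fixed, so that the CDGA condition matches the derivation-bracket formulas on the nose.

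Next, the composite $g\circ\omega$ is modeled by pulling back along a model of $S^{N-1}\to T\hookrightarrow P$. The standard computation for the attaching map is the one used in Andrews--Arkowitz and in \cite{Na2014} for $p=2$. In $H^*(P)$ the missing class $x_1\cdots x_p$ plays the role of a coboundary, so the obstruction cocycle on the top class equals the component of $d\circ$ (would-be $\theta_{\{1,\ldots,p\}}$) forced by the identities. By construction this is the bracket $[\theta_1,\ldots,\theta_p]_\Theta$. Under the Lupton--Smith identification $\Phi$, this obstruction is $\Phi(g\circ\omega)$, up to the sign convention fixed in Section~5. This shows that $\Phi$ maps the Whitehead product into the set of bracket classes.

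For the reverse inclusion, take any defining system $\Theta$ for $\theta_1,\ldots,\theta_p$. Reverse the construction by defining $\psi(v)=\sum_I\theta_I(v)\otimes e_I$. The defining system identities are exactly the statement that $\psi$ is a CDGA map, so $\psi$ realizes, via the adjunction and the finiteness hypotheses on $X$, an extension $g$ over $T$ with restrictions $\alpha_i$. Its Whitehead product element then has image the class of $[\theta_1,\ldots,\theta_p]_\Theta$ by the same computation. Non-emptiness of $[\alpha_1,\ldots,\alpha_p]_{\mathrm{Wh}}$ is used to guarantee that defining systems exist at all, which happens via the first direction.
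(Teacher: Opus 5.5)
\textbf{There is a genuine gap.} Your outline follows the paper's route: the adjoint over $T\times X$, the formal model $\mathcal H_T\otimes A$, strictification, coefficient comparison, and the converse built from a defining system. But the step that carries the content of the theorem, $\Phi(\widetilde g\circ w)=[\theta_1,\ldots,\theta_p]_\Theta$, is asserted rather than proved, and as you describe it, it cannot be carried out.

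Your representative $\psi$ lives on $\mathcal H_T=H^*(T;\mathbb Q)$, and $H^{N-1}(T;\mathbb Q)=0$, since every basis element $e_I$ with $I\subsetneq[p]$ has degree $n_I\le N-2$. So the only CDGA map $\mathcal H_T\to H^*(S^{N-1};\mathbb Q)$ is the augmentation, and nothing defined on $\mathcal H_T$ models the essential map $w$. Pulling back along $S^{N-1}\to T\hookrightarrow P$ gives nothing either, because that composite is null-homotopic. The obstruction heuristic relates at best the \emph{vanishing} of the algebraic and topological obstructions. It does not identify them as elements of $H^{1-N}(\Der(\wedge V,A;\phi))$, let alone with the correct sign. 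The cases you cite ($X=*$, and $p=2$ where $T=W$) do not supply this identification with the mixed terms $[\theta_{I_1},\ldots,\theta_{I_k}]$.

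The missing idea is to pass to the minimal model of the fat wedge, in three steps.
First, lift $\psi$ through the surjective quasi-isomorphism $q_T\otimes1\colon\mathcal M_T\otimes A\to\mathcal H_T\otimes A$. Here $\mathcal M_T$ contains a generator $u_{[p]}$ of degree $N-1$ with $du_{[p]}=e_{[p]}$.
Second, normalize the lift $\widetilde\psi$ so that no pure $e_{[p]}$-term occurs, by subtracting $d(u_{[p]}\otimes\mu_v)$. The $e_{[p]}$-coefficient of $d\widetilde\psi(v)=\widetilde\psi(dv)$ then shows that the $u_{[p]}$-coefficient equals $(-1)^{N+1+\kappa([p])}[\theta_1,\ldots,\theta_p]_\Theta(v)$.
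Third, you need an explicit representative $\omega$ of $w$ with $\omega(u_{[p]})=(-1)^{N-1+\kappa([p])}e$ and $Q(\omega)|_{V_P^{N-1}}=0$. The paper obtains this via the transgression in the fibre of $T\hookrightarrow P$ (Lemma~\ref{lem:dual_composite}, Proposition~\ref{prop:model_universal-HOP}). Composing with $q_S\circ\omega$ kills all but the $u_{[p]}$-term, and the two signs cancel.

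``Up to sign'' is not enough here. The theorem is an equality of sets that need not be closed under negation: in Example~\ref{ex:cp2-cp4-inclusion} the product is the single class $-60\beta$.

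\textbf{Two set-up claims also need correcting.}
First, the $e_I$-coefficients $\psi_I$ with $|I|\geq2$ are not $\phi$-derivations on $\wedge V$. By Proposition~\ref{prop:component-decomposition}, their values on products involve the terms $\langle\psi_{I_1},\ldots,\psi_{I_k}\rangle$, coming from the products $e_Je_K=\pm e_{J\sqcup K}$ in $\mathcal H_T$. A CDGA map is already strictly multiplicative, so no strictification changes this. You must take $\theta_I$ to be the $\phi$-derivation determined by $(-1)^{\kappa(I)}\psi_I|_V$, and in the converse use $(-1)^{\kappa(I)}e_I\otimes\theta_I(v)$ on generators. Without this twist the coefficient identities are not those of Definition~\ref{def:defining-system} (see Proposition~\ref{prop:differential-identity}).

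Second, $\theta_\emptyset=\phi$ is not automatic: the $\emptyset$-component is only homotopic to $\phi$. Strictification must fix it together with the singleton components. Lemma~\ref{lem:strict-cohomology-model} does this by extending over $\mathcal H_T$ a given homotopy on the $\mathcal H_W$-part to $1\otimes\phi+\sum_i e_i\otimes\theta_i$. The extension is built generator by generator using the homotopy operator on $A\otimes\Lambda(t,dt)$. Your induction on $|I|$ would have to be turned into such an argument.
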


The precise definition of a defining system and of the associated bracket
$[\theta_1,\ldots,\theta_p]_\Theta$ is given in
Definition~\ref{def:defining-system}. Theorem~A is proved as
Theorem~\ref{thm:main}.

The relation between Theorem~A and the results of Andrews and Arkowitz \cite{AA1978} and Buijs and Murillo \cite{BM2008} is discussed after Theorem~\ref{thm:main}.
When $X=*$, Theorem~A recovers the formula of Andrews and Arkowitz.
For general mapping spaces, defining systems combine the brackets of Buijs and Murillo and describe the resulting indeterminacy.
The examples in Section~6 illustrate the theorem. They show a non-zero
third-order product with a unique defining system, dependence of defining
systems and indeterminacy on the base map, and a non-zero associated bracket
arising from higher components even when the Sullivan model has only quadratic
differentials.

This paper is organized as follows. 
Section~2 recalls higher order Whitehead products and the universal Whitehead product associated with the fat wedge.
Section~3 studies Sullivan models and cohomology models for the fat wedge and gives a Sullivan representative for the universal higher order Whitehead product.
Section~4 studies strict representatives and the differential identities satisfied by their coefficients.
Section~5 introduces defining systems in the derivation complex and proves the main theorem for mapping
spaces.
Section~6 gives examples illustrating the theorem and the indeterminacy arising from choices of defining systems.

\section{Higher order Whitehead products}

In this section, we recall Porter's definition of higher order Whitehead products
and fix the notation used throughout the paper. All spaces are assumed to be
pointed, and all maps and homotopies preserve basepoints. Unless otherwise
specified, homology groups in this section have integer coefficients. When no
confusion can arise, we do not distinguish between a map and its homotopy class.

Let $S^{n_1}, \ldots, S^{n_p}$ be spheres with $p\geq2$ and $n_i\geq 2$.
Put
\[
P=S^{n_1}\times\cdots\times S^{n_p},
\quad
W=S^{n_1}\vee\cdots\vee S^{n_p}
\quad
\text{and}
\quad
T=T(S^{n_1},\ldots,S^{n_p})
\]
for the Cartesian product, the wedge sum and the fat wedge, respectively.
Thus $T$ is the subspace of $P$ consisting of elements with at least one
coordinate equal to the basepoint. We write
\[
N=n_1+\cdots+n_p.
\]

The product $P$ is obtained from the fat wedge $T$ by attaching one
$N$-cell:
\[
P=T\cup_w e^N.
\]
Thus the relative CW complex $(P,T)$ has a single cell in dimension $N$.
In particular, the pair $(P,T)$ is $(N-1)$-connected.
Hence the relative Hurewicz homomorphism $\operatorname{Hur} : \pi_N(P,T) \to H_N(P,T)$ is an isomorphism.
The attaching map
$
w:S^{N-1} \to T
$
is called the \emph{universal} $p$-th order Whitehead product. More precisely, the
homotopy class of $w$ is given by the image of the fundamental class
$
[P]\in H_N(P)
$
under the composite
\begin{equation}\label{eq:universal-HOP}
H_N(P)
\xrightarrow{j_*}
H_N(P,T)
\xrightarrow{\operatorname{Hur}^{-1}}
\pi_N(P,T)
\xrightarrow{\partial}
\pi_{N-1}(T),
\end{equation}
where $j_*$ is induced by the inclusion of pairs, $\operatorname{Hur}$ is the
relative Hurewicz isomorphism, and $\partial$ is the connecting homomorphism.
Here, under the K{\"u}nneth isomorphism, the class $[P]$ is identified with
$
[S^{n_1}]\otimes\cdots\otimes[S^{n_p}]
\in
H_{n_1}(S^{n_1})\otimes \cdots \otimes H_{n_p}(S^{n_p}).
$

Let $X$ be a pointed space and let
$
\alpha_i\in \pi_{n_i}(X)
$
be represented by $g_i:S^{n_i}\to X$ for $i=1,\ldots,p$.
The maps $g_i$ determine a map
\[
g:W=S^{n_1}\vee\cdots\vee S^{n_p}
\longrightarrow X.
\]
The \emph{$p$-th order Whitehead product} of
$\alpha_1,\ldots,\alpha_p$ is defined as the subset
\[
[\alpha_1,\ldots,\alpha_p]_{\mathrm{Wh}}
\subset
\pi_{N-1}(X)
\]
given by
\[
[\alpha_1,\ldots,\alpha_p]_{\mathrm{Wh}}
=
\left\{
\widetilde g\circ w
\ \middle|\
\widetilde g:T\to X
\text{ is an extension of }g
\right\}.
\]
Thus this set is non-empty precisely when the map $g:W\to X$ extends to the
fat wedge $T$. Note that, for $p=2$, the fat wedge is the wedge sum
$
T(S^{n_1},S^{n_2})=S^{n_1}\vee S^{n_2},
$
and the above construction recovers the ordinary Whitehead product.

We shall use the following existence criterion due to Porter.

\begin{thm}[{\cite[Theorem 2.7]{Po1965}}]\label{thm:porter}
For $p>2$, the higher order Whitehead product
\[
[\alpha_1,\ldots,\alpha_p]_{\mathrm{Wh}}
\]
is non-empty if and only if all lower order Whitehead products formed by
proper subcollections of $\{\alpha_1,\ldots,\alpha_p\}$ vanish.
Here, a set-valued Whitehead product is said to vanish if it contains the
zero element.
\end{thm}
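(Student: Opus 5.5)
The plan is an induction on the skeleta of the fat wedge by coordinate subsets, together with obstruction theory for extending maps one product cell at a time. For a subset $I\subset\{1,\dots,p\}$ write $P_I=\prod_{i\in I}S^{n_i}$ and $T_I$ for its fat wedge. The fat wedge $T$ is the union of the $P_I$ over proper subsets $I$. Moreover, $T$ is built from $W$ by attaching the top cells $e^{N_I}$ of the $P_I$, one for each proper $I$ with $|I|\ge2$, in order of increasing $|I|$. The attaching map of $e^{N_I}$ is exactly the universal Whitehead product $w_I:S^{N_I-1}\to T_I\subset T$.

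For the direction ``non-empty $\Rightarrow$ all proper subproducts vanish'', suppose $\widetilde g:T\to X$ extends $g$, and let $I$ be a proper subset with $|I|\ge2$. Then $P_I\subset T$, so the restriction $\widetilde g|_{T_I}$ extends $g|_{W_I}$ to $T_I$. This restriction further extends over $P_I=T_I\cup_{w_I}e^{N_I}$, hence $\widetilde g|_{T_I}\circ w_I\simeq *$. This exhibits $0\in[\alpha_i]_{i\in I,\mathrm{Wh}}$.

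For the converse, I build an extension over $T$ by induction on $k=|I|$. Suppose that $g$ has been extended to the union $T^{(k-1)}$ of all $P_J$ with $|J|\le k-1$. To extend over a cell $e^{N_I}$ with $|I|=k<p$, we need the current map composed with $w_I$ to be null. It is the hypothesis ``some extension of $g|_{W_I}$ over $T_I$ has null composite'' that supplies this, but the hypothesis speaks of some extension, not necessarily ours. This is the main obstacle: the vanishing of $[\alpha_i]_{i\in I,\mathrm{Wh}}$ for every $I$ separately does not obviously allow us to choose the extensions over all the $T_I$ compatibly.

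I would first try to handle this by choosing, for each maximal proper $I$, an extension $h_I:P_I\to X$; such an extension exists by hypothesis, since the subproduct for $I$ contains $0$. We then need the maps $h_I$ to agree on the overlaps $P_{I\cap I'}$. To arrange this I would follow Porter's argument: proceed by downward re-choice, using the fact that the inclusion $T_I\subset P_I$ is a cofibration. The differences between two extensions over a cell are then classes in $\pi_{N_J}(X)$, and on the union they are absorbed by modifying extensions on higher cells. Alternatively, I would read the hypothesis as Porter does, namely ``vanish'' meaning compatibly, and cite \cite[Theorem 2.7]{Po1965} directly. In fact, as stated in the paper, this is Porter's theorem and will be quoted rather than reproved; the honest plan is to cite it, noting that the easy direction is the restriction argument above.
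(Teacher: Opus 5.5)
Your proof of the easy direction is correct. If $\widetilde g$ extends $g$ over $T$, then for each proper $I$ with $|I|\ge 2$ the restriction of $\widetilde g$ to $P_I\subset T$ extends $\widetilde g|_{T_I}$ over $e^{N_I}$, so $\widetilde g|_{T_I}\circ w_I\simeq *$. The paper gives no proof at all and simply quotes Porter, so your fallback matches it. The converse sketch, however, cannot succeed: you would re-choose extensions and absorb the differences in $\pi_{N_J}(X)$ on higher cells, but the compatibility problem you identified is a genuine obstruction. If ``vanish'' means that each lower product \emph{separately} contains $0$, the ``if'' direction holds for $p=3$ and fails for $p\ge 4$. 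The case $p=3$ is immediate, since $T(S^{n_1},S^{n_2},S^{n_3})$ is $W$ with the cells $e^{n_i+n_j}$ attached independently along the ordinary products $w_{ij}$. It is also the only case the paper uses: every example in Section~6 has three inputs.

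For $p=4$, here is a rational counterexample in Quillen models.
\begin{itemize}
\item[(a)] \emph{Model of the fat wedge.} The fat wedge has model $\mathbb{L}(a'_I:\emptyset\ne I\subsetneq[4])$, where $\partial a'_I$ is a signed sum of $[a'_J,a'_K]$ over $I=J\sqcup K$, and $w_I$ corresponds to $\partial a'_I$. An extension of $g$ is then a DGL map with $a'_i\mapsto a_i$. Also, $0$ lies in the $I$-th product iff some such map on $\mathbb{L}(a'_J:\emptyset\ne J\subsetneq I)$ sends $\partial a'_I$ to a boundary.
\item[(b)] \emph{The space $X$.} Its Quillen model is the free DGL on cycles $a_i$ with $|a_i|=n_i-1$; on $b_{ij}$ ($i<j$), with $\partial b_{ij}$ the image of $\partial a'_{ij}$; on a cycle $c$ with $|c|=|b_{12}|$; and on $y_{ijk}$ with
\[
\partial y_{123}=t_{123}+\sigma_3[a_3,c],\qquad
\partial y_{124}=t_{124}-\sigma_4[a_4,c],\qquad
\partial y_{134}=t_{134},\qquad
\partial y_{234}=t_{234}.
\]
Here $t_{ijk}$ is the image of $\partial a'_{ijk}$ under $a'_i\mapsto a_i$, $a'_{ij}\mapsto b_{ij}$. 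The sign $\sigma_k=\pm1$ is defined by the requirement that replacing $b_{12}$ by $b_{12}+c$ adds $\sigma_k[a_k,c]$ to $t_{12k}$.
\item[(c)] \emph{Every proper subproduct contains $0$.} The pairwise products vanish. Moreover, $0\in[\alpha_1,\alpha_2,\alpha_3]_{\mathrm{Wh}}$ via $a'_{12}\mapsto b_{12}+c$, and $0\in[\alpha_1,\alpha_2,\alpha_4]_{\mathrm{Wh}}$ via $a'_{12}\mapsto b_{12}-c$. The other two triples vanish via the $b_{ij}$.
\item[(d)] \emph{No global extension exists.} The differential preserves the weight grading $a_i\mapsto e_i$, $b_{ij}\mapsto e_i+e_j$, $c\mapsto e_1+e_2$, $y_{ijk}\mapsto e_i+e_j+e_k$. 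So in a global extension, the weight-$(e_i+e_j)$ part of the image of $a'_{ij}$ is $b_{ij}$ for pairs other than $12$. For the pair $12$ it is $b_{12}+\lambda c$, with a single $\lambda$ used for both triples.
\item[(e)] \emph{Conclusion.} In weight $e_1+e_2+e_3$ and the relevant degree, the only boundaries are multiples of $\partial y_{123}$. Since $t_{123}$ and $[a_3,c]$ are linearly independent, this forces $\lambda=1$. Weight $e_1+e_2+e_4$ likewise forces $\lambda=-1$. Hence $[\alpha_1,\ldots,\alpha_4]_{\mathrm{Wh}}=\emptyset$, although every proper subproduct contains $0$.
\end{itemize}
For $p\ge4$ the lower products must therefore vanish \emph{coherently}, through a compatible family of extensions over the $P_I$. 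This is exactly the coherence built into Definition~\ref{def:defining-system}. Your alternative reading of ``vanish'' is thus necessary, not a convenience, and the citation should either be restricted to $p=3$ or stated in the coherent form.
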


In the sequel, we mainly use the above description of
$[\alpha_1,\ldots,\alpha_p]_{\mathrm{Wh}}$
in terms of extensions over the fat wedge. In Sections 4 and 5, we translate
such extensions into algebraic data in derivation complexes.

\section{Sullivan models and cohomology models for the fat wedge}

From this section on, all CDGAs are taken over the rational number field
$\mathbb{Q}$. 
We recall Sullivan models for $P$ and $T$, together with the cohomology
models used below.
The Sullivan model for the fat wedge was studied by Andrews and Arkowitz
\cite[Section~4]{AA1978}. The construction is recalled from the viewpoint of
relative Sullivan models.

For each $i$, let
\[
m_i:\mathcal M_{S^{n_i}}=(\wedge V_i,d)
\longrightarrow A_{\mathrm{PL}}(S^{n_i})
\]
be the minimal Sullivan model for $S^{n_i}$, where
\[
(\wedge V_i,d)
=
\begin{cases}
(\wedge(e_i),d=0), & n_i \text{ odd},\\[2mm]
(\wedge(e_i,e'_i),de_i=0,\ de'_i=e_i^2), & n_i \text{ even}.
\end{cases}
\]
Here $|e_i|=n_i$ and, in the even-dimensional case,
$|e'_i|=2n_i-1$; see \cite[\S 12(a), Example~1]{FHT}.
The minimal Sullivan model for the product $P$ is given by
\[
\mathcal M_P
=
\bigotimes_{i=1}^p \mathcal M_{S^{n_i}}
\cong
\bigl(\wedge(V_1\oplus\cdots\oplus V_p),d\bigr),
\]
equipped with a quasi-isomorphism
$m_P:\mathcal M_P\to A_{\mathrm{PL}}(P)$; see
\cite[\S 12(a), Example~2]{FHT}.

Let $j_T:T\hookrightarrow P$ be the canonical inclusion.
By the relative Sullivan model construction
\cite[Proposition~14.3]{FHT}, the CDGA morphism
$
j_T^*\circ m_P:\mathcal M_P\to A_{\mathrm{PL}}(T)
$
admits a relative Sullivan model
\[
\mathcal M_P
\longrightarrow
\mathcal M_T
=
\bigl(\wedge(V_1\oplus\cdots\oplus V_p)\otimes\wedge V_F,d\bigr)
\xrightarrow{\simeq}
A_{\mathrm{PL}}(T).
\]
We may choose $\mathcal M_T$ to be a minimal Sullivan model of $T$.
Set
\[
V_P=V_1\oplus\cdots\oplus V_p
\quad
\text{and}
\quad
V_T=V_P\oplus V_F.
\]

We now describe the part of $\mathcal M_T$ which is relevant for the universal
higher order Whitehead product. Put
\[
N=n_1+\cdots+n_p,
\quad
\text{and}
\quad
e_{[p]}=e_1e_2\cdots e_p\in \mathcal M_P^N.
\]
The inclusion $j_T:T\hookrightarrow P$ induces an isomorphism
$
j_T^*:H^q(P;\mathbb{Q})\xrightarrow{\cong} H^q(T;\mathbb{Q})
$
 for $q<N$,
while the top class represented by $e_{[p]}$ maps to zero in
$H^N(T;\mathbb{Q})$. Thus, in the relative Sullivan model for $T$, we may
choose $V_F$ so that
\[
V_F=V_F^{\geq N-1},
\quad
\text{and}
\quad
\dim V_F^{N-1}=1.
\]
Let $u_{[p]}\in V_F^{N-1}$ be a generator. We may assume that
$
du_{[p]}=e_{[p]}.
$

Since $P$, $T$ and $W$ are formal, we use their rational cohomology
algebras, equipped with the zero differential, as CDGA models. Set
\[
\mathcal H_P=H^*(P;\mathbb Q),\qquad
\mathcal H_T=H^*(T;\mathbb Q),\qquad
\mathcal H_W=H^*(W;\mathbb Q).
\]
Let
$
\rho:\mathcal H_T \to \mathcal H_W
$
be the morphism induced by the inclusion $W\hookrightarrow T$.
We choose the formality quasi-isomorphisms compatibly with this inclusion,
taking
$q_T :  \mathcal{M}_T \to \mathcal{H}_T$
to send each $e_i$ to its cohomology class and all other generators to zero.
Then the composite
$ \rho\circ q_T : \mathcal{M}_T \to \mathcal{H}_W$
is a Sullivan representative for the inclusion $W\hookrightarrow T$.

For a subset
$
I=\{i_1<\cdots<i_k\}\subset [p]=\{1,\ldots,p\},
$
write
$
e_I=e_{i_1}\cdots e_{i_k}
$
and
$
|e_I|=\sum_{i\in I}n_i,
$
with the convention $e_\emptyset=1$.

The quotient $P/T$ is homeomorphic to the smash product
$
S^{n_1}\wedge\cdots\wedge S^{n_p}
\cong
S^N.
$
Hence the top class $e_{[p]}$ becomes zero in passing from $P$ to $T$, and
we have
$
\mathcal H_T\cong \mathcal H_P/(e_{[p]}).
$
Equivalently, $\mathcal H_T$ has the homogeneous basis
$
\{e_I\}_{I\subsetneq[p]}.
$
Similarly,
\[
\mathcal H_W
\cong
\mathbb{Q}\oplus\bigoplus_{i=1}^p \mathbb{Q} \langle e_i \rangle,
\]
and all products of positive-degree elements vanish in $\mathcal H_W$.
With respect to the above bases, $\rho$ is given by
$
\rho(e_i)=e_i
$
for $i=1,\ldots,p$, and
$
\rho(e_I)=0
$
for $|I|\geq 2$.

Let $F$ be the homotopy fiber of the inclusion $j_T:T\hookrightarrow P$.
A Sullivan model for $F$ is obtained by quotienting $\mathcal M_T$ by
the ideal generated by $V_P$, and is therefore identified with
\[
\mathcal M_F=(\wedge V_F,\bar d),
\]
where $\bar d$ is induced by $d$; see \cite[Theorem 15.3]{FHT}.
By construction, we have
\[
\mathcal M_F^{N-1}=V_F^{N-1}=\mathbb{Q}\langle u_{[p]}\rangle.
\]
Moreover, in $\mathcal M_T$, the differential satisfies
$du_{[p]}=e_{[p]}$.
Equivalently, the part of the differential relevant to the top class is the
linear map
\[
d:V_F^{N-1}\longrightarrow \mathcal M_P^N,
\qquad
u_{[p]}\longmapsto e_{[p]}.
\]

Recall the composite map
$
\Xi=\partial\circ \operatorname{Hur}^{-1}\circ j_*
$
defined in \eqref{eq:universal-HOP}. This map represents the universal
Whitehead product. After rationalization, it gives
\[
\Xi_{\mathbb{Q}}:H_N(P;\mathbb{Q})
\longrightarrow
\pi_{N-1}(T)\otimes\mathbb{Q}.
\]

We use the standard identification
\[
\pi_q(Z)\otimes \mathbb{Q}
\cong
\operatorname{Hom}(V_Z^q,\mathbb{Q})
\]
for a simply connected space $Z$ of finite type with minimal Sullivan model
$(\wedge V_Z,d)$, where $q\geq 2$; see \cite[Theorem 15.11]{FHT}.

The following lemma describes the linear dual of $\Xi_{\mathbb{Q}}$ in terms of
the above Sullivan models.

\begin{lem}\label{lem:dual_composite}
Under the standard identification
$
\pi_{N-1}(T)\otimes\mathbb{Q}
\cong
\operatorname{Hom}(V_T^{N-1},\mathbb{Q}),
$
the linear dual of
$
\Xi_{\mathbb{Q}}:H_N(P;\mathbb{Q})
\to
\pi_{N-1}(T)\otimes\mathbb{Q}
$
is represented by the composite
\[
\xymatrix{
V_T^{N-1}
\ar[r]^-{Q(\pi_F)}
&
V_F^{N-1}
\ar[r]^-{q^{-1}}_-{\cong}
&
\mathcal M_F^{N-1}
\ar[r]^-{d}
&
\mathcal M_P^N.
}
\]
Here $Q(\pi_F)$ is the linear part of the quotient map
$
\pi_F:\mathcal M_T\to\mathcal M_F,
$
and $q:\mathcal M_F\to V_F$ is the projection onto the indecomposables.
Moreover, the target $\mathcal M_P^N$ is regarded as representing
$H^N(P;\mathbb{Q})$ through the cohomology class of $e_{[p]}$.
\end{lem}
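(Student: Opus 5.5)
The plan is to dualize each map in the composite $\Xi=\partial\circ\operatorname{Hur}^{-1}\circ j_*$ separately, and then identify the result with the differential $d:V_F^{N-1}\to\mathcal M_P^N$, $u_{[p]}\mapsto e_{[p]}$. The key tool is the long exact homotopy sequence of the fibration $F\to T\xrightarrow{j_T}P$, together with its Sullivan-model version \cite[Theorem 15.3]{FHT}: the relative model $\mathcal M_P\to\mathcal M_T\to\mathcal M_F$ yields the dual of this homotopy sequence on indecomposables.

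First I replace the relative homotopy group $\pi_N(P,T)$ by $\pi_{N-1}(F)$. Under this identification, $\partial:\pi_N(P,T)\to\pi_{N-1}(T)$ corresponds to the map $\pi_{N-1}(F)\to\pi_{N-1}(T)$ induced by the fiber inclusion. By \cite[Theorem 15.11]{FHT} and the naturality of the identification, this map is dual to the linear part $Q(\pi_F):V_T^{N-1}\to V_F^{N-1}$ of the quotient $\pi_F$. Since $V_F=V_F^{\geq N-1}$ and $V_F^{N-1}=\mathbb Q\langle u_{[p]}\rangle$, the fiber $F$ is $(N-2)$-connected and $\mathcal M_F^{N-1}=V_F^{N-1}$. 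Hence $q$ is an isomorphism in this degree, and rationally the Hurewicz map $\pi_{N-1}(F)\otimes\mathbb Q\to H_{N-1}(F;\mathbb Q)$ is an isomorphism, dual to $q^{-1}$ followed by taking cohomology classes.

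Next I treat $\operatorname{Hur}^{-1}\circ j_*:H_N(P)\to\pi_N(P,T)\cong\pi_{N-1}(F)$. Composed with the Hurewicz map of $F$, I expect this to agree, up to sign, with the homology transgression $H_N(P,T)\cong H_N(P,\ast)\to H_{N-1}(F)$, i.e.\ the boundary in the Serre spectral sequence at the first nontrivial row. The reason is naturality of Hurewicz maps with respect to the connecting homomorphisms of the pair $(P,T)$ versus $(P,\ast)$ and the fiber sequence. In the relative Sullivan model, the transgression in cohomology $H^{N-1}(F)\to H^N(P)$ is computed by lifting the cocycle $u_{[p]}\in\mathcal M_F$ to $\mathcal M_T$ and applying $d$. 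Because $V_F$ has no elements below degree $N-1$, $du_{[p]}$ lands in $\mathcal M_P^N$, and we have arranged $du_{[p]}=e_{[p]}$. This is exactly the last arrow of the composite, and it represents the class $[e_{[p]}]$ dual to $[P]$. Pairing with $[P]=[S^{n_1}]\otimes\cdots\otimes[S^{n_p}]$ then gives the stated representation of the dual of $\Xi_{\mathbb Q}$.

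The main obstacle will be the middle step: checking rigorously that the composite of $\operatorname{Hur}^{-1}$, the identification $\pi_N(P,T)\cong\pi_{N-1}(F)$, and the fiber Hurewicz map agrees with the transgression computed by the Sullivan differential. I would handle this in the bottom degree, where everything is a one-dimensional $\mathbb Q$-vector space. One option is a naturality argument comparing $F\to T\to P$ with the path fibration over the cofiber $P/T\simeq S^N$. The other is to check the claim directly on the cohomology Serre spectral sequence of the fibration $F\to T\to P$, where the $E_2^{0,N-1}\to E_2^{N,0}$ differential is the transgression and is modeled by $u_{[p]}\mapsto e_{[p]}$. Sign conventions can be absorbed, since the lemma only asserts a representation up to the identification of $\mathcal M_P^N$ with $H^N(P;\mathbb Q)$ via $[e_{[p]}]$.
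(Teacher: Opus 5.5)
Your proposal is correct and follows essentially the same route as the paper, which also factors $\Xi=i_*\circ\operatorname{Hur}^{-1}\circ\tau$ through $\pi_{N-1}(F)$ via the homological transgression $\tau$ and identifies the duals of $i_*$, the Hurewicz map of $F$, and $\tau$ with $Q(\pi_F)$, $q$, and $d\colon V_F^{N-1}\to\mathcal M_P^N$. The step you flag as the main obstacle is simply asserted there as a commutative diagram, justified by naturality of Hurewicz maps and the comparison of the exact sequences of the pair $(P,T)$ and of the fibration; one caution is that Proposition~\ref{prop:model_universal-HOP} reads an exact sign off this lemma, so you should not dismiss signs in that middle step as ``absorbable''.
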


\begin{proof}
Let $i:F\to T$ be the canonical map
from the homotopy fiber. By the naturality of the Hurewicz homomorphisms and
the comparison between the long exact sequences of the pair $(P,T)$ and of the
fibration, the following diagram commutes:
\[
\xymatrix@C=35pt{
H_N(P) \ar[d]_-{\tau} \ar[r]^-{j_*}
&
H_N(P,T) \ar[r]^-{\operatorname{Hur}^{-1}}_-{\cong}
&
\pi_N(P,T) \ar[d]^-{\cong} \ar[r]^-{\partial}
&
\pi_{N-1}(T)
\\
H_{N-1}(F) \ar[rr]^-{\operatorname{Hur}^{-1}}_-{\cong}
&
&
\pi_{N-1}(F) \ar[ru]_-{i_*}
&
}
\]
Here $\tau$ is the homological transgression; see
\cite[Definition~6.3]{McC}. Hence
\[
        \Xi=i_*\circ\operatorname{Hur}^{-1}\circ\tau .
\]

Taking the linear duals of the three maps in this factorization, and using the
standard Sullivan identification of rational homotopy groups with the duals of
indecomposables, the projection
$
q:\mathcal M_F\to V_F
$
corresponds to the linear dual of the Hurewicz homomorphism
$
\operatorname{Hur}:\pi_{N-1}(F)\otimes\mathbb{Q}
\to
H_{N-1}(F;\mathbb{Q})
$;
see \cite[\S 15]{FHT}. Since
$
\pi_F:\mathcal M_T\to\mathcal M_F
$
is a Sullivan representative for $i:F\to T$, its linear part $Q(\pi_F)$ is the
linear dual of $i_*$. Finally, the differential
$
d:V_F^{N-1}\to \mathcal M_P^N
$
represents the cohomological transgression, which is dual to $\tau$.

Therefore the linear dual of
$
\Xi=i_*\circ\operatorname{Hur}^{-1}\circ\tau
$
is represented by
$
d\circ q^{-1}\circ Q(\pi_F).
$
This completes the proof.
\end{proof}

Recall that Andrews and Arkowitz \cite[Proposition~4.7]{AA1978}
computed the universal $p$-th order Whitehead product using the Sullivan
pairing. We rewrite their result in terms of the relative Sullivan model.

\begin{prop}\label{prop:model_universal-HOP}
Let
$
\omega:\mathcal M_T\to \mathcal M_{S^{N-1}}
$
be a Sullivan representative for the universal $p$-th order Whitehead product
$
w:S^{N-1}\to T.
$
Then
\[
        \omega(u_{[p]})=(-1)^{N-1+\kappa}e,
\]
where $e\in \mathcal M_{S^{N-1}}^{N-1}$ is the fundamental generator and
\[
        \kappa=\sum_{1\leq i<j\leq p} n_i n_j .
\]
Moreover, the linear part $Q(\omega)$ vanishes on $V_P^{N-1}\subset V_T^{N-1}$.
\end{prop}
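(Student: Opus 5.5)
The plan is to compute the linear part $Q(\omega)$ on $V_T^{N-1}=V_P^{N-1}\oplus\mathbb Q\langle u_{[p]}\rangle$. We use two facts: for a map $S^{N-1}\to T$, the linear part of a Sullivan representative is dual to the induced map on rational homotopy, and $\mathcal M_{S^{N-1}}^{N-1}=\mathbb Q\langle e\rangle$ has no decomposables. Since $\omega$ is a CDGA map, $\omega(v)$ equals $Q(\omega)(v)$ times $e$ for every $v\in V_T^{N-1}$. So it suffices to determine the functional on $V_T^{N-1}$ corresponding to $[w]\in\pi_{N-1}(T)\otimes\mathbb Q$ under the identification $\pi_{N-1}(T)\otimes\mathbb Q\cong\operatorname{Hom}(V_T^{N-1},\mathbb Q)$.

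First I show that $Q(\omega)$ vanishes on $V_P^{N-1}$. The composite $j_T\circ w:S^{N-1}\to P$ is null, since $w$ is the attaching map of the top cell of $P$. Hence $\omega$ restricted to $\mathcal M_P$ is homotopic to the augmentation. Because $V_P\subset\mathcal M_T$ is the image of the indecomposables of $\mathcal M_P$ under the relative model, and linear parts are homotopy invariants, $Q(\omega)|_{V_P^{N-1}}=Q(\omega\circ\iota)=0$.

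Second, I evaluate on $u_{[p]}$ using Lemma~\ref{lem:dual_composite}. We have $[w]=\Xi_{\mathbb Q}([P])$, so the functional $\langle u_{[p]},[w]\rangle$ equals the pairing of $d\circ q^{-1}\circ Q(\pi_F)(u_{[p]})=du_{[p]}=e_{[p]}$ with $[P]$. Thus, up to the sign conventions built into the identification, $Q(\omega)(u_{[p]})=\langle e_{[p]},[P]\rangle$ times the pairing of $e$ with the fundamental class of $S^{N-1}$.

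The main obstacle is this sign. Under the Künneth identification $[P]=[S^{n_1}]\otimes\cdots\otimes[S^{n_p}]$, the Koszul convention gives
\[
\langle e_1\cdots e_p,[S^{n_1}]\otimes\cdots\otimes[S^{n_p}]\rangle=(-1)^{\kappa'}\prod_i\langle e_i,[S^{n_i}]\rangle,
\]
where $\kappa'=\sum_{i<j}n_in_j$ counts passing each $e_j$ past each $[S^{n_i}]$ with $i<j$. So $\kappa'=\kappa$. The remaining $(-1)^{N-1}$ should come from the connecting homomorphism $\partial:\pi_N(P,T)\to\pi_{N-1}(T)$, dually from the transgression $d:V_F^{N-1}\to\mathcal M_P^N$. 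This is the standard sign in the Sullivan-model identification of the boundary map of a fibration, which shifts degree by one and contributes $(-1)^{N-1}$ in FHT's conventions.

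To pin the sign down, I would cross-check against Andrews–Arkowitz \cite[Proposition~4.7]{AA1978}. They state the same value through the Sullivan pairing, so the plan is to translate their pairing into the present notation with $du_{[p]}=e_{[p]}$ and confirm that the total sign is $(-1)^{N-1+\kappa}$. The case $p=2$, where $T=S^{n_1}\vee S^{n_2}$ and $w$ is the ordinary Whitehead product $[\iota_1,\iota_2]$, gives a further check against the quadratic-part formula of \cite[\S13(e)]{FHT}.
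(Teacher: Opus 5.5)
Your proposal is correct and follows the paper's proof essentially step for step: the vanishing on $V_P^{N-1}$ comes from $j_T\circ w\simeq *$ together with $Q(\iota)$ being the inclusion, and the value on $u_{[p]}$ comes from Lemma~\ref{lem:dual_composite} with $[P]$ represented by $(-1)^{\kappa}e_{[p]}^*$. One small correction: the paper fixes the factor $(-1)^{N-1}$ by the convention for dualizing the degree $+1$ map $d$ on the degree $N-1$ element $u_{[p]}$, namely $\mpair{u_{[p]}}{Q(\pi_F)^*(q^{-1})^*d^*(e_{[p]}^*)}_T=(-1)^{N-1}\mpair{du_{[p]}}{e_{[p]}^*}_P$, and your Andrews--Arkowitz cross-check would not confirm this sign literally, because they obtain only $(-1)^{\kappa}$ (Remark~\ref{rem:sign}).
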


\begin{proof}
Let
$
Q(\omega):V_T^{N-1}\to \mathbb Q\langle e\rangle\cong \mathbb Q
$
be the linear part of $\omega$ in degree $N-1$. We write
\[
        Q(\omega)(u_{[p]})=\lambda e
\]
for some $\lambda\in\mathbb Q$. It is enough to prove that
$
\lambda=(-1)^{N-1+\kappa}.
$

Consider the following commutative diagram:
\[
\xymatrix@C=12pt{
V_{T}^{N-1} \times \mathrm{Hom}(V_{T}^{N-1} , \mathbb{Q})
\ar[rr]^-{\mpair{ \ }{ \ }_T} && \mathbb{Q}
\\
V_{T}^{N-1} \times \mathrm{Hom}(V_{F}^{N-1} , \mathbb{Q}) \ar[u]^-{1 \times Q(\pi_F)^*} &&
\\
V_{T}^{N-1} \times \mathrm{Hom}(\mathcal{M}_{F}^{N-1} , \mathbb{Q}) \ar[u]_-{\cong}^-{1\times (q^{-1})^*} \ar[r]^-{\pi_F \times 1} & \mathcal{M}_{F}^{N-1} \times \mathrm{Hom}(\mathcal{M}_{F}^{N-1}, \mathbb{Q}) \ar[ruu]^-{\mpair{ \ }{ \ }_{F}} &
\\
V_{T}^{N-1} \times \mathrm{Hom}(\mathcal{M}_{P}^{N} , \mathbb{Q}) \ar[u]^-{1 \times d^*} 
\ar[r]^-{\pi_F \times 1}
& \mathcal{M}_{F}^{N-1} \times \mathrm{Hom}(\mathcal{M}_{P}^{N} , \mathbb{Q})  \ar[r]^-{d \times 1} \ar[u]^-{1\times d^*} & \mathcal{M}_{P}^{N} \times \mathrm{Hom}(\mathcal{M}_{P}^{N} , \mathbb{Q}). \ar[uuu]^-{\mpair{ \ }{ \ }_{P}}
}
\]
Here, $\mpair{\ }{\ }_T$, $\mpair{\ }{\ }_F$ and $\mpair{\ }{\ }_P$ are the evaluation maps.
The maps $\mpair{\ }{\ }_F$ and $\mpair{\ }{\ }_P$ correspond to the Kronecker pairings between homology and cohomology.

Let $e_{[p]}^*\in \operatorname{Hom}(\mathcal M_P^N,\mathbb Q)$ be the dual
element of
$
e_{[p]}=e_1e_2\cdots e_p,
$
so that $e_{[p]}^*(e_{[p]})=1$. Under the Kronecker pairing, the fundamental
class $[P]\in H_N(P;\mathbb Q)$ is represented by
$
(-1)^{\kappa} e_{[p]}^*.
$
This sign comes from the Koszul sign rule for the pairing between the product
of cohomology classes and the cross product of homology classes; see
\cite[VI.4.12 Example]{Bre}.

Lemma~\ref{lem:dual_composite} shows that the composite of the vertical maps
on the left-hand side is the linear dual of $\Xi_{\mathbb Q}$, up to the
above identifications. Hence, by the definition of the universal $p$-th order
Whitehead product,
\[
        Q(\omega)
        =
        (-1)^{\kappa}
        Q(\pi_F)^*\circ (q^{-1})^*\circ d^*(e_{[p]}^*).
\]
By the commutativity of the diagram, we have
\begin{align*}
\lambda
&= \mpair{u_{[p]}}{Q(\omega)}_T                                      \\
&= (-1)^\kappa
\mpair{u_{[p]}}
{Q(\pi_F)^*\circ (q^{-1})^*\circ d^*(e_{[p]}^*)}_T                    \\
&= (-1)^{N-1+\kappa}
\mpair{du_{[p]}}{e_{[p]}^*}_P                                         \\
&= (-1)^{N-1+\kappa}
\mpair{e_{[p]}}{e_{[p]}^*}_P                                          \\
&= (-1)^{N-1+\kappa}.
\end{align*}
Here the sign $(-1)^{N-1}$ comes from taking the dual of the differential of degree $+1$. 

Moreover, let $\iota : \mathcal{M}_P \hookrightarrow \mathcal{M}_T$ be the inclusion which is a Sullivan representative for $j_T : T\hookrightarrow P$. 
The composite
\[
\xymatrix{
\mathcal{M}_P   \ar[r]^-{\iota}
&
\mathcal{M}_T \ar[r]^-{\omega}
&
\mathcal{M}_{S^{N-1}}
}
\]
is a Sullivan representative for $j_T\circ w$.
As $w$ is the attaching map of the top cell in $P=T\cup_w e^N$, the composite $j_T\circ w$ is null-homotopic, so the linear part of $\omega\circ\iota$ is zero.
Since $Q(\iota)$ is the inclusion $V_P\hookrightarrow V_T$, it follows that $Q(\omega)|_{V_P^{N-1}}=0$.
This completes the proof.
\end{proof}

\begin{remark}\label{rem:sign}
In \cite[Proposition~4.7]{AA1978}, Andrews and Arkowitz obtain the sign
$(-1)^\kappa$ using the Sullivan pairing. The additional factor
$(-1)^{N-1}$ in Proposition~\ref{prop:model_universal-HOP} comes from our
convention of using CDGA representatives and taking the dual of the
differential of degree $+1$.
\end{remark}

In the next section, we use the cohomology model $\mathcal H_T$ and the map
$\rho:\mathcal H_T\to\mathcal H_W$ to obtain algebraic defining systems from
CDGA representatives over the fat wedge.


\section{Differential identities and strict representatives}

In this section, we prepare the algebraic construction that will be used later
for higher order Whitehead products in mapping spaces. Throughout this section, let
$(\wedge V,d)$ be a Sullivan algebra, $(A,d_A)$ a CDGA, and
$
\phi:\wedge V\to A
$
a CDGA morphism.

A \emph{$\phi$-derivation} of degree $r$
is a linear map $\theta:\wedge V\to A$ of degree $r$ satisfying
\[
        \theta(ab)
        =
        \theta(a)\phi(b)+(-1)^{r|a|}\phi(a)\theta(b)
\]
for $a,b\in \wedge V$.
Denote by $\Der^r(\wedge V,A;\phi)$ the vector space of
$\phi$-derivations of degree $r$. These vector spaces form a cochain complex
$\Der(\wedge V,A;\phi)$ with differential
\[
        \delta\theta
        =
        d_A\theta-(-1)^r \theta d
\]
for $\theta\in \Der^r (\wedge V,A;\phi)$.
Since $\wedge V$ is free as a graded commutative algebra, a $\phi$-derivation
is determined by its restriction to $V$.

\subsection{Buijs--Murillo brackets}

In this subsection, we recall the bracket on derivations introduced by Buijs and Murillo \cite{BM2008}.
Accordingly, a homogeneous linear map $V\to A$ will be regarded as the corresponding $\phi$-derivation on $\wedge V$ when no confusion can arise.
Let $\theta_i:V\to A$ be homogeneous linear maps for $i=1,\ldots,k$.
Define a linear map
\[
        \langle \theta_1,\ldots,\theta_k\rangle_{\phi}
        :\wedge V\longrightarrow A
\]
as follows. For homogeneous elements $v_1,\ldots,v_m\in V$, set
\[
        \langle \theta_1,\ldots,\theta_k\rangle_{\phi}
        (v_1\cdots v_m)=0
\]
if $m<k$. If $m\geq k$, then
\begin{align*}
&\langle \theta_1,\ldots,\theta_k\rangle_{\phi}(v_1\cdots v_m) \\
&=
\begin{cases}
\displaystyle
\sum_{\sigma\in \mathfrak S_k}
(-1)^{\varepsilon(\theta,v,\sigma)}
\theta_1(v_{\sigma(1)})\cdots \theta_k(v_{\sigma(k)})
& (m=k), \\[3mm]
\displaystyle
\sum_{\tau\in \mathfrak S_{k,m-k}}
(-1)^{\varepsilon(v,\tau)}
\langle \theta_1,\ldots,\theta_k\rangle_{\phi}
(v_{\tau(1)}\cdots v_{\tau(k)})
\phi(v_{\tau(k+1)}\cdots v_{\tau(m)})
& (m>k).
\end{cases}
\end{align*}
Here $\mathfrak S_k$ denotes the symmetric group on $k$ letters, and
$\mathfrak S_{k,m-k}$ denotes the set of $(k,m-k)$-shuffles.
The sign $(-1)^{\varepsilon(v,\tau)}$ is the Koszul sign determined by
\[
        v_{\tau(1)}\cdots v_{\tau(m)}
        =
        (-1)^{\varepsilon(v,\tau)}
        v_1\cdots v_m .
\]
In the case $m=k$, the
sign is determined by
\[
\varepsilon(\theta,v,\sigma)
=
\varepsilon(v,\sigma)
+
\sum_{i=2}^k |\theta_i|
\left(\sum_{j=1}^{i-1}|v_{\sigma(j)}|\right).
\]
When $k=1$, the map
$
\langle\theta_1\rangle_{\phi}:\wedge V\to A
$
is just $\theta_1$, regarded as the corresponding $\phi$-derivation.

For $k\geq 2$, the Buijs--Murillo bracket
$
[\theta_1,\ldots,\theta_k]\in \Der(\wedge V,A;\phi)
$
is defined by
\[
[\theta_1,\ldots,\theta_k](v)
=
-
(-1)^{|\theta_1|+\cdots+|\theta_k|}
\langle \theta_1,\ldots,\theta_k\rangle_{\phi}(dv)
\]
for $v\in V$. 
For $k=1$, we define the bracket to be the differential of $\theta_1$:
\[
        [\theta_1]=\delta\theta_1
        =
        d_A\theta_1-(-1)^{|\theta_1|}\theta_1d.
\]

\subsection{Component decomposition}

Let $\mathcal H_T=H^*(T;\mathbb{Q})$ be the rational cohomology algebra of the
fat wedge $T$, as described in Section 3.
In this subsection, we
decompose a CDGA morphism
$
\varphi:\wedge V\to \mathcal H_T\otimes A
$
with respect to the basis of $\mathcal H_T$.
Recall that $\mathcal H_T$ has the basis
$\{e_I\}_{I\subsetneq [p]}$.
Hence we can write
\[
        \varphi=\sum_{I\subsetneq [p]} e_I\otimes \varphi_I
\]
for homogeneous linear maps $\varphi_I:\wedge V\to A$ of degree $-|e_I|$.
The component $\varphi_{\emptyset}$ is a CDGA morphism, since it is obtained
by composing $\varphi$ with the augmentation
$
\mathcal H_T\to \mathbb Q.
$
The other components satisfy the following identity.

\begin{prop}\label{prop:component-decomposition}
Let $I\subsetneq [p]$ be non-empty. Then
\[
\varphi_I
=
\sum_{\{I_1,\ldots,I_k\}\in \Pi(I)}
(-1)^{\chi(I_1,\ldots,I_k)}
\langle \varphi_{I_1},\ldots,\varphi_{I_k}\rangle_{\varphi_{\emptyset}}.
\]
Here $\Pi(I)$ denotes the set of all partitions of $I$ into non-empty subsets.
For each partition, we order the subsets by
$
\min I_1<\cdots<\min I_k.
$
On the right-hand side, each $\varphi_{I_r}$ is regarded as its restriction
$\varphi_{I_r}|_V:V\to A$. 
The sign is given by
\[
        \chi(I_1,\ldots,I_k)
        =
        \sum_{q<r}|\varphi_{I_r}||\varphi_{I_q}|.
\]
\end{prop}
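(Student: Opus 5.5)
Both sides of the identity are linear maps $\wedge V\to A$, so the plan is to check them on monomials $v_1\cdots v_m$ with $v_j\in V$. The starting point is multiplicativity of $\varphi$. Write
\[
\varphi(v_1\cdots v_m)=\varphi(v_1)\cdots\varphi(v_m)=\prod_{j=1}^m\Bigl(\sum_{J\subsetneq[p]} e_J\otimes\varphi_J(v_j)\Bigr),
\]
expand the product in $\mathcal H_T\otimes A$, and read off the coefficient of $e_I$. In $\mathcal H_T=\mathcal H_P/(e_{[p]})$ we have $e_Je_{J'}=\pm e_{J\cup J'}$ when $J\cap J'=\emptyset$ and $J\cup J'\neq[p]$. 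If $J\cap J'\neq\emptyset$ the product vanishes: either some $e_i$ with $n_i$ odd squares to zero, or, for $n_i$ even, $e_i^2=0$ already in $H^*(S^{n_i})$. Since $I\subsetneq[p]$, the quotient by $e_{[p]}$ plays no role for the coefficient of $e_I$. So this coefficient is a sum over ordered tuples $(J_1,\dots,J_m)$ of pairwise disjoint subsets with union $I$, where $J_j$ is the component applied to $v_j$ and some $J_j$ may be empty.

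Next, regroup by the nonempty $J_j$. Let $S\subset\{1,\dots,m\}$ be the set of positions with $J_j\neq\emptyset$, say $|S|=k$. The nonempty $J_j$ then form an unordered partition $\{I_1,\dots,I_k\}\in\Pi(I)$, ordered by minima, together with a bijection $\sigma$ assigning the blocks to positions in $S$. The empty positions contribute $\varphi_\emptyset(v_j)$. Moving the factors $\varphi_\emptyset(v_j)$, $j\notin S$, to the right and the $e$'s to the left gives exactly the shuffle decomposition in the recursive definition of $\langle\ \rangle_{\varphi_\emptyset}$: the $(k,m-k)$-shuffles $\tau$ pick out $S$, and the permutations $\sigma\in\mathfrak S_k$ reproduce the sum $\sum_\sigma\varphi_{I_1}(v_{\sigma(1)})\cdots\varphi_{I_k}(v_{\sigma(k)})$. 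In the $m=k$ formula, $\theta_r=\varphi_{I_r}$ receives $v_{\sigma(r)}$, and summing over all $\sigma$ corresponds to summing over all assignments of blocks to positions. Putting this together, the coefficient of $e_I$ in $\varphi(v_1\cdots v_m)$ equals the claimed sum applied to $v_1\cdots v_m$. The terms with $k>m$ vanish, which matches the convention $\langle\dots\rangle(v_1\cdots v_m)=0$ for $m<k$. For $m=0$ (constants) both sides vanish, because $I\neq\emptyset$. This establishes the identity as maps on all of $\wedge V$.

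The main obstacle is matching the signs. Three kinds of signs appear:
\begin{itemize}
\item Koszul signs from reordering the $v$'s, namely $\varepsilon(v,\tau)$ and $\varepsilon(v,\sigma)$;
\item signs from commuting each $e_{J}$ past the $A$-factors $\varphi_{J'}(v_{j'})$ to its left, using $|\varphi_J|=-|e_J|$;
\item the sign from reordering the product $e_{I_{\sigma^{-1}(\cdot)}}$ into the form $e_{I_1}\cdots e_{I_k}$, and then rewriting $e_{I_1}\cdots e_{I_k}=\pm e_I$.
\end{itemize}
I would handle this by first fixing the convention that $e_I$ is the ordered product, so that $e_{I_1}\cdots e_{I_k}$ differs from $e_I$ by the shuffle sign of the blocks. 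I would then track each $e_{I_r}$ as it moves left past the elements $\varphi_{I_q}(v_{\sigma(q)})$, $q<r$, and past the $v$'s. Degree $|e_{I_r}|$ commuting past $|\varphi_{I_q}|+|v_{\sigma(q)}|$ produces the term $|\theta_r|\sum_{q<r}|v_{\sigma(q)}|$ appearing in $\varepsilon(\theta,v,\sigma)$, together with the term $\sum_{q<r}|\varphi_{I_r}||\varphi_{I_q}|=\chi$, up to parity since $|e_{I_r}|\equiv|\varphi_{I_r}|$ mod $2$.

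What remains is to check that the block-reordering sign coming from $e_{I_1}\cdots e_{I_k}=\pm e_I$ is either absorbed by this bookkeeping or cancels with the sign coming from the ordering of $\mathcal H_T$ basis elements. I would verify this first in the case $k=2$ and $m=2$, and then argue by induction on $k$, adding one block at a time.
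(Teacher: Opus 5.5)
Your expansion and regrouping are exactly the paper's argument, and the first two kinds of signs work out as you expect. If you first permute the factors $e_{J_j}\otimes\varphi_{J_j}(v_j)$, which have degree $|v_j|$, you get $\varepsilon(v,\tau)$ and $\varepsilon(v,\sigma)$. Moving each $e_{I_r}$ to the left past $\varphi_{I_q}(v_{\sigma(q)})$ for $q<r$ then gives $\sum_r|\theta_r|\sum_{q<r}|v_{\sigma(q)}|+\chi$ modulo $2$.

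The gap is the step you leave open. The sign $s$ defined by $e_{I_1}\cdots e_{I_k}=(-1)^{s}e_I$, that is, $s=\sum_{q<r}\sum_{a\in I_q,\,b\in I_r,\,a>b}n_an_b$, is neither absorbed nor cancelled. The basis element $e_I$ is fixed, and nothing else in the computation depends on how the blocks interleave. Carried out correctly, your bookkeeping proves the identity with $(-1)^{\chi+s}$ in place of $(-1)^{\chi}$. A check with $k=m=2$ and $I=\{1,2\}$ would not detect this, since $s=0$ there.

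So the deferred verification cannot succeed, because the identity as stated is false in general. Take $p\geq 4$ with $n_2,n_3$ odd, $A=\mathbb Q$, and $V=\mathbb Q\langle v_1,v_2\rangle$ with $d=0$, $|v_1|=n_1+n_3$, $|v_2|=n_2$. Put $\varphi(v_1)=e_1e_3$ and $\varphi(v_2)=e_2$. Then $\varphi(v_1v_2)=e_1e_3e_2=(-1)^{n_2n_3}e_{\{1,2,3\}}$, so $\varphi_{\{1,2,3\}}(v_1v_2)=(-1)^{n_2n_3}$.

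On the right-hand side, the $k=1$ term vanishes because $\varphi_\emptyset(v_1)=\varphi_\emptyset(v_2)=0$. The only contribution is from the partition $\{\{1,3\},\{2\}\}$ with $\sigma=\mathrm{id}$, taking $\theta_1=\varphi_{\{1,3\}}$ and $\theta_2=\varphi_{\{2\}}$. Its sign is $(-1)^{\chi+|\theta_2||v_1|}=1$, since $\chi=n_2(n_1+n_3)$ and $|\theta_2||v_1|=-n_2(n_1+n_3)$.

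The correct exponent is $\chi+s\equiv\sum_{q<r}\sum_{a\in I_q,\,b\in I_r,\,a<b}n_an_b$. The stated version holds whenever $s$ is always even, for instance when all $n_i$ are even, which covers every example in Section~6. The paper's proof has the same gap. Its assertion that the Koszul signs of $e_{J_1}\cdots e_{J_m}$ are included in the sign of the multilinear map is correct for the part coming from permuting positions, but not for $s$.
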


\begin{proof}
Let $v_1,\ldots,v_m\in V$ be homogeneous elements. We compare the
coefficient of $e_I$ in the identity
\[
        \varphi(v_1\cdots v_m)
        =
        \varphi(v_1)\cdots \varphi(v_m).
\]
The coefficient of $e_I$ on the left-hand side is
$\varphi_I(v_1\cdots v_m)$.

Let $\mathcal D_m(I)$ be the set of ordered $m$-tuples
$(J_1,\ldots,J_m)$ of subsets of $I$ such that
\[
        I=J_1\sqcup\cdots\sqcup J_m,
\]
where empty subsets are allowed. Then the terms in
$\varphi(v_1)\cdots\varphi(v_m)$ which contribute to the coefficient of
$e_I$ are
\begin{align*}
&\sum_{(J_1,\ldots,J_m)\in \mathcal D_m(I)}
(e_{J_1}\otimes \varphi_{J_1}(v_1))
\cdots
(e_{J_m}\otimes \varphi_{J_m}(v_m))                       \\
&=
e_I\otimes
\left(
\sum_{(J_1,\ldots,J_m)\in \mathcal D_m(I)}
(-1)^{\varepsilon(J_1,\ldots,J_m)+\eta}
\varphi_{J_1}(v_1)\cdots\varphi_{J_m}(v_m)
\right).
\end{align*}
Here the signs are defined by
\[
        e_{J_1}\cdots e_{J_m}
        =
        (-1)^{\varepsilon(J_1,\ldots,J_m)}e_I
\quad
\text{and}
\quad
        \eta
        =
        \sum_{q<r}|e_{J_r}|\,
        |\varphi_{J_q}(v_q)|.
\]
The second sign comes from moving the elements of $\mathcal H_T$ past the
elements of $A$ in the tensor product algebra $\mathcal H_T\otimes A$.

Now rewrite this sum by grouping together the non-empty subsets among
$J_1,\ldots,J_m$. Let $\Pi_k(I)$ be the set of partitions of $I$ into
$k$ non-empty subsets. For each partition
$\{I_1,\ldots,I_k\}\in \Pi_k(I)$, we use the order fixed in the statement,
namely
$
\min I_1<\cdots<\min I_k.
$
Then the above sum is equal to
\begin{align*}
&\sum_{k=1}^m
\sum_{\{I_1,\ldots,I_k\}\in \Pi_k(I)}
(-1)^{\chi(I_1,\ldots,I_k)}
\langle
\varphi_{I_1},\ldots,\varphi_{I_k}
\rangle_{\varphi_{\emptyset}}
(v_1\cdots v_m).
\end{align*}
Indeed, the terms with exactly these non-empty subsets are precisely the
terms appearing in the definition of
$
\langle
\varphi_{I_1},\ldots,\varphi_{I_k}
\rangle_{\varphi_{\emptyset}}.
$
The Koszul signs coming from the products
$e_{J_1}\cdots e_{J_m}$ are included in the sign of this multilinear map.
Using $|e_{J_r}|=-|\varphi_{J_r}|$, the remaining sign is
\[
        \chi(I_1,\ldots,I_k)
        =
        \sum_{q<r}|\varphi_{I_r}||\varphi_{I_q}|.
\]

Finally,
$
\langle
\varphi_{I_1},\ldots,\varphi_{I_k}
\rangle_{\varphi_{\emptyset}}
(v_1\cdots v_m)
$
vanishes by definition if $k>m$. Hence the sum over $1\leq k\leq m$ is the
same as the sum over all partitions of $I$. Therefore the coefficient of
$e_I$ in $\varphi(v_1)\cdots\varphi(v_m)$ is
\[
\sum_{\{I_1,\ldots,I_k\}\in \Pi(I)}
(-1)^{\chi(I_1,\ldots,I_k)}
\langle
\varphi_{I_1},\ldots,\varphi_{I_k}
\rangle_{\varphi_{\emptyset}}
(v_1\cdots v_m).
\]
Since this holds for all homogeneous elements $v_1,\ldots,v_m\in V$, the
desired identity follows.
\end{proof}

For a non-empty subset $I\subseteq [p]$, put
\[
\kappa (I)
=
\sum_{\substack{a<b\\ a,b\in I}} n_a n_b .
\]
The sign $(-1)^{\kappa(I)}$ comes from comparing the ordered product
$e_I=e_{i_1}\cdots e_{i_k}$ with the product orientation of
$S^{n_{i_1}}\times\cdots\times S^{n_{i_k}}$.
If $\{I_1,\ldots,I_k\}$ is a partition of $I$, then
\[
\kappa(I)
\equiv
\sum_{r=1}^k \kappa(I_r)
+
\chi(I_1,\ldots,I_k)
\pmod 2.
\]
Indeed, the pairs $(a,b)$ in $I$ are divided into pairs contained in one block
and pairs contained in two different blocks.

\begin{prop}\label{prop:differential-identity}
For $\emptyset\ne I\subsetneq [p]$, let
$
\theta_I\in \Der^{-|e_I|}(\wedge V,A;\varphi_{\emptyset})
$
be the $\varphi_{\emptyset}$-derivation determined by
$
        \theta_I(v)=(-1)^{\kappa(I)}\varphi_I(v)
$
for $v\in V$.
Then
\[
\delta\theta_I
=
-
\sum_{\substack{\{I_1,\ldots,I_k\}\in\Pi(I)\\ k\geq 2}}
[
\theta_{I_1},
\ldots,
\theta_{I_k}
].
\]
Here the blocks $I_1,\ldots,I_k$ are ordered as in
Proposition~\ref{prop:component-decomposition}, and the brackets are taken
with respect to $\varphi_{\emptyset}$.
\end{prop}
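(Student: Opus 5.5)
The identity should come from the fact that $\varphi$ is a chain map, read off coefficient by coefficient along $e_I$. Since $\mathcal H_T$ has zero differential, the chain-map condition $(1\otimes d_A)\varphi=\varphi d$ on $\wedge V$ splits, for each $I\subsetneq[p]$, into
\[
d_A\varphi_I=(-1)^{|e_I|}\varphi_I d .
\]
The sign $(-1)^{|e_I|}$ comes from moving $d_A$ past $e_I$. I will evaluate this identity on a generator $v\in V$. The left-hand side then involves only the linear part $\varphi_I|_V$, and that is $(-1)^{\kappa(I)}\theta_I(v)$. The right-hand side involves $\varphi_I(dv)$ with $dv\in\wedge^{\geq2}V$, since $\wedge V$ is taken minimal.

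To handle $\varphi_I(dv)$ I apply Proposition~\ref{prop:component-decomposition}. It expands $\varphi_I$ on all of $\wedge V$ as a sum over partitions $\{I_1,\ldots,I_k\}\in\Pi(I)$ of the terms
\[
(-1)^{\chi}\langle\varphi_{I_1},\ldots,\varphi_{I_k}\rangle_{\varphi_\emptyset}.
\]
The one-block partition $k=1$ contributes $\langle\varphi_I|_V\rangle$, i.e.\ the $\varphi_\emptyset$-derivation extending $\varphi_I|_V$, which is $(-1)^{\kappa(I)}\theta_I$. So the $k=1$ term produces the $\theta_I d$ part of $\delta\theta_I$, and the terms with $k\geq2$ will produce the brackets.

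Next I substitute $\varphi_{I_r}|_V=(-1)^{\kappa(I_r)}\theta_{I_r}$. Because $\langle\ \rangle$ is multilinear, each term becomes
\[
(-1)^{\chi+\sum_r\kappa(I_r)}\langle\theta_{I_1},\ldots,\theta_{I_k}\rangle(dv).
\]
By the parity identity $\kappa(I)\equiv\sum_r\kappa(I_r)+\chi$ stated just before the proposition, this sign is $(-1)^{\kappa(I)}$. That common factor matches the one on $\theta_I$, so it cancels throughout.

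Rearranging and using $|\theta_I|=-|e_I|$ gives
\[
d_A\theta_I-(-1)^{|\theta_I|}\theta_I d=(-1)^{|\theta_I|}\sum_{k\geq2}\langle\theta_{I_1},\ldots,\theta_{I_k}\rangle(dv).
\]
By definition of the Buijs--Murillo bracket,
\[
[\theta_{I_1},\ldots,\theta_{I_k}](v)=-(-1)^{\sum_r|\theta_{I_r}|}\langle\theta_{I_1},\ldots,\theta_{I_k}\rangle(dv),
\]
and $\sum_r|\theta_{I_r}|=|\theta_I|$. So the right-hand side equals $-\sum_{k\geq 2}[\theta_{I_1},\ldots,\theta_{I_k}](v)$.

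Both sides of the claimed identity are $\varphi_\emptyset$-derivations of the same degree. The left side is a derivation because $\delta$ preserves $\Der$, and the right side by definition. Derivations on a free algebra are determined by their values on $V$, so agreement on generators gives the identity.

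The main obstacle is sign bookkeeping, in two places. First, the sign convention for $1\otimes d_A$ acting on $e_I\otimes a$ must be checked. Second, the signs in Proposition~\ref{prop:component-decomposition} must combine with the $\kappa$'s to give exactly $(-1)^{\kappa(I)}$. I would first verify the case $I=\{1,2\}$ by hand to fix the conventions before writing out the general case.
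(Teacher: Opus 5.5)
Your proposal is correct and follows the paper's proof essentially step by step. You take the $e_I$-coefficient of the chain-map condition, $d_A\varphi_I=(-1)^{|e_I|}\varphi_I d$, and expand $\varphi_I(dv)$ via Proposition~\ref{prop:component-decomposition}, with the one-block term giving $\theta_I d$; the parity identity $\kappa(I)\equiv\sum_r\kappa(I_r)+\chi$ collapses the signs, the Buijs--Murillo definition turns the remaining terms into brackets, and agreement on generators finishes. Your aside that $\wedge V$ is minimal is not assumed in Section~4, but it is also not needed, since the $k=1$ term already absorbs any linear part of $dv$.
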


\begin{proof}
Let $v\in V$. By definition, we have
\[
        d_A\theta_I(v)=(-1)^{\kappa(I)}d_A\varphi_I(v).
\]
On the other hand, Proposition~\ref{prop:component-decomposition} gives
\begin{align*}
\varphi_I(dv)
&=
\langle \varphi_I\rangle_{\varphi_{\emptyset}}(dv)
+
\sum_{\substack{\{I_1,\ldots,I_k\}\in\Pi(I)\\ k\geq 2}}
(-1)^{\chi(I_1,\ldots,I_k)}
\langle
\varphi_{I_1},\ldots,\varphi_{I_k}
\rangle_{\varphi_{\emptyset}}(dv)       \\
&=
(-1)^{\kappa(I)}\theta_I(dv)
\\
&
\hspace{4em}
-
\sum_{\substack{\{I_1,\ldots,I_k\}\in\Pi(I)\\ k\geq 2}}
(-1)^{\chi(I_1,\ldots,I_k)
+\sum_j\kappa(I_j)+\sum_j|\theta_{I_j}|}
[
\theta_{I_1},\ldots,\theta_{I_k}
](v)                                      \\
&=
(-1)^{\kappa(I)}\theta_I(dv)
-
\sum_{\substack{\{I_1,\ldots,I_k\}\in\Pi(I)\\ k\geq 2}}
(-1)^{\kappa(I)+\sum_j|\theta_{I_j}|}
[
\theta_{I_1},\ldots,\theta_{I_k}
](v).
\end{align*}
In the last equality, we used
\[
\kappa(I)
\equiv
\sum_j\kappa(I_j)+\chi(I_1,\ldots,I_k)
\pmod 2.
\]

Since $\varphi$ is a CDGA morphism, the coefficient of $e_I$ in
$d\varphi(v)=\varphi(dv)$ gives
\[
        d_A\varphi_I(v)=(-1)^{|\varphi_I|}\varphi_I(dv).
\]
Therefore
\begin{align*}
\delta\theta_I(v)
&=
d_A\theta_I(v)-(-1)^{|\theta_I|}\theta_I(dv)                         \\
&=
(-1)^{\kappa(I)+|\varphi_I|}\varphi_I(dv)
-
(-1)^{|\theta_I|}\theta_I(dv)                                         \\
&=
(-1)^{\kappa(I)+|\theta_I|}
\left\{
\varphi_I(dv)-(-1)^{\kappa(I)}\theta_I(dv)
\right\}                                                             \\
&=
-
\sum_{\substack{\{I_1,\ldots,I_k\}\in\Pi(I)\\ k\geq 2}}
[
\theta_{I_1},\ldots,\theta_{I_k}
](v).
\end{align*}
Here we used
$
|\theta_I|\equiv \sum_j|\theta_{I_j}|\pmod 2.
$
Since both sides are $\varphi_{\emptyset}$-derivations, the equality holds on
$\wedge V$.
\end{proof}

\begin{cor}\label{cor:single-component}
For $i=1,\ldots,p$, $\varphi_{\{i\}}$ is a
$\varphi_{\emptyset}$-derivation and satisfies
$\delta\varphi_{\{i\}}=0$.
\end{cor}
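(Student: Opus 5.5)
This is a direct specialization of Proposition~\ref{prop:component-decomposition} and Proposition~\ref{prop:differential-identity} to singletons $I=\{i\}$. The plan has two parts.

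\textbf{Step 1: $\varphi_{\{i\}}$ is a $\varphi_{\emptyset}$-derivation.} A singleton has exactly one partition into non-empty blocks, namely $\{\{i\}\}$, and the sign is
\[
\chi(\{i\})=0 .
\]
So Proposition~\ref{prop:component-decomposition} reads
\[
\varphi_{\{i\}}=\langle\varphi_{\{i\}}|_V\rangle_{\varphi_\emptyset}.
\]
By the convention fixed for $k=1$, the right-hand side is the $\varphi_\emptyset$-derivation extending $\varphi_{\{i\}}|_V$. Hence $\varphi_{\{i\}}$ agrees on all of $\wedge V$ with a $\varphi_\emptyset$-derivation, and so is one.

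If I want to confirm this against the defining formula for $\langle\ \rangle$, I would compare the coefficient of $e_i$ in
\[
\varphi(ab)=\varphi(a)\varphi(b).
\]
Because $e_i$ has no factorization other than $e_i=e_i\cdot 1=1\cdot e_i$, the only contributions are $\varphi_{\{i\}}(a)\varphi_\emptyset(b)$ and $\varphi_\emptyset(a)\varphi_{\{i\}}(b)$. The Koszul sign on the second term is $(-1)^{n_i|a|}$, which equals $(-1)^{|\varphi_{\{i\}}||a|}$ since $|\varphi_{\{i\}}|=-n_i$. This is exactly the $\varphi_\emptyset$-derivation rule.

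\textbf{Step 2: $\delta\varphi_{\{i\}}=0$.} Here $\kappa(\{i\})=0$, because there are no pairs $a<b$ in a singleton. So the derivation $\theta_{\{i\}}$ of Proposition~\ref{prop:differential-identity} satisfies
\[
\theta_{\{i\}}=\varphi_{\{i\}}
\]
on $V$, and by Step 1 it agrees on all of $\wedge V$. Proposition~\ref{prop:differential-identity} then expresses $\delta\theta_{\{i\}}$ as a sum over partitions of $\{i\}$ with $k\ge 2$ blocks. There are no such partitions, so the sum is empty and $\delta\varphi_{\{i\}}=0$.

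As a direct check, the $e_i$-coefficient of $d\varphi(v)=\varphi(dv)$ gives
\[
d_A\varphi_{\{i\}}(v)=(-1)^{|\varphi_{\{i\}}|}\varphi_{\{i\}}(dv),
\]
which is precisely $\delta\varphi_{\{i\}}(v)=0$ on $V$. The differential $\delta$ preserves derivations, so the vanishing extends to all of $\wedge V$.

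The only point needing care is Step 1. Proposition~\ref{prop:differential-identity} presupposes that $\theta_I$ is a derivation determined by its values on $V$, so one must first know that $\varphi_{\{i\}}$ itself, and not merely its restriction to $V$, is that derivation. Step 1 supplies exactly this.
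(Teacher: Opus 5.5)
Your proof is correct and follows the paper's argument: the case $I=\{i\}$ of Proposition~\ref{prop:component-decomposition} gives the derivation property, and $\kappa(\{i\})=0$ together with the absence of partitions of $\{i\}$ into at least two blocks gives $\delta\varphi_{\{i\}}=0$ via Proposition~\ref{prop:differential-identity}. Your direct $e_i$-coefficient checks, including the sign $(-1)^{n_i|a|}$, are accurate. They are optional confirmations of the same computation.
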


\begin{proof}
The derivation property follows from the case $I=\{i\}$ of
Proposition~\ref{prop:component-decomposition}. Since $\kappa(\{i\})=0$, the
derivation $\theta_{\{i\}}$ in Proposition~\ref{prop:differential-identity} is
$\varphi_{\{i\}}$. For $I=\{i\}$, there is no partition into at least two
non-empty blocks. Hence Proposition~\ref{prop:differential-identity} gives
$\delta\varphi_{\{i\}}=0$.
\end{proof}

\subsection{Strict representatives}

Let $\mathcal H_W$ be the rational cohomology algebra of the wedge sum $W$ and
$
\rho:\mathcal H_T\to \mathcal H_W
$
the CDGA morphism introduced in Section 3.
The next lemma allows us to choose a convenient representative
$
\varphi:\wedge V\to \mathcal H_T\otimes A.
$
It says that, if the restriction of a representative over $\mathcal H_T$ is
homotopic to a given map $\psi$, then we may replace the representative by a
homotopic one whose restriction is exactly $\psi$.

\begin{lem}\label{lem:strict-cohomology-model}
Let $\varphi':\wedge V\to \mathcal H_T\otimes A$ be a CDGA morphism.
Suppose that $(\rho\otimes 1)\varphi'$ is homotopic to a CDGA morphism
$
\psi:\wedge V\to \mathcal H_W\otimes A.
$
Then there exists a CDGA morphism
$
\varphi:\wedge V\to \mathcal H_T\otimes A
$
which is homotopic to $\varphi'$ and satisfies
$
(\rho\otimes 1)\varphi=\psi.
$
\end{lem}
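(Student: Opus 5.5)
The plan is a homotopy lifting argument for Sullivan algebras. The key point is that $\rho\otimes 1:\mathcal H_T\otimes A\to\mathcal H_W\otimes A$ is surjective. Indeed, $\rho$ is surjective: it sends $1$ and each $e_i$ to themselves and kills $e_I$ for $|I|\ge2$. It is also split by the linear map $s:\mathcal H_W\to\mathcal H_T$, $e_i\mapsto e_i$. Since everything has zero differential on the $\mathcal H$ factor, $\rho\otimes 1$ is a surjective CDGA morphism.

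I would use the standard cylinder (path-object) description of homotopy. The relevant fact is \cite[\S 12(b), \S 14]{FHT}: for a Sullivan algebra $\wedge V$ and a surjection of CDGAs $\pi:B\to C$, a CDGA map $f:\wedge V\to B$, a homotopy $H:\wedge V\to C\otimes\wedge(t,dt)$ with $\varepsilon_0 H=\pi f$ lifts to a homotopy $\widetilde H:\wedge V\to B\otimes\wedge(t,dt)$ with $\varepsilon_0\widetilde H=f$ and $(\pi\otimes1)\widetilde H=H$.

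The lift is built by induction over a well-ordered homogeneous basis $\{v_\alpha\}$ of $V$ with $dv_\alpha\in\wedge V_{<\alpha}$. Here we take $B=\mathcal H_T\otimes A$ and $C=\mathcal H_W\otimes A$.

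1. Suppose $\widetilde H$ is already defined on $\wedge V_{<\alpha}$. Consider the map
\[
B\otimes\wedge(t,dt)\to (C\otimes\wedge(t,dt))\times_{C} B,
\]
where the fiber product is over evaluation at $0$ on the first factor and $\pi$ on the second.
2. This map is a surjective quasi-isomorphism. Surjectivity uses the splitting of $\pi$ and the fact that $\ker\varepsilon_0$ on $\wedge(t,dt)$ is acyclic.
3. We need an element $x\in B\otimes\wedge(t,dt)$ with $dx=\widetilde H(dv_\alpha)$ mapping to the pair $(H(v_\alpha),f(v_\alpha))$. The standard lifting lemma for surjective quasi-isomorphisms \cite[Lemma 12.4]{FHT} provides exactly this. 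Its hypotheses hold because $\widetilde H(dv_\alpha)$ is a cycle mapping to $(dH(v_\alpha), df(v_\alpha))$.
4. Setting $\widetilde H(v_\alpha)=x$ completes the inductive step.

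Alternatively, one can simply cite the relative lifting property of Sullivan algebras with respect to the fibration $\wedge(t,dt)\otimes B\to\ldots$, as in \cite[Proposition 14.6]{FHT}.

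With the lift in hand, apply it with $f=\varphi'$ and $H$ a homotopy from $(\rho\otimes1)\varphi'$ to $\psi$. Then $\varphi:=\varepsilon_1\widetilde H$ is homotopic to $\varphi'$ via $\widetilde H$. It satisfies
\[
(\rho\otimes1)\varphi=\varepsilon_1(\rho\otimes1\otimes1)\widetilde H=\varepsilon_1 H=\psi,
\]
as required.

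The main obstacle is the precise surjectivity/acyclicity statement in step 2, needed to invoke the lifting lemma. Concretely, we must verify that $(\pi\otimes1,\varepsilon_0)$ is surjective onto the fiber product with acyclic kernel. The kernel is $\ker\pi\otimes\ker\varepsilon_0$, which is acyclic because $\ker\varepsilon_0\subset\wedge(t,dt)$ is acyclic over $\mathbb Q$. Surjectivity holds because, given $(h,b)$ with $h(0)=\pi(b)$, we can write $h=\pi(b)\otimes 1+h'$ with $h'\in C\otimes\ker\varepsilon_0$, and lift $h'$ using the linear splitting $s\otimes 1$. Once this is checked, the rest is formal.
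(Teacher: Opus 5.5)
Your argument is correct and is essentially the paper's proof: both lift the homotopy from $(\rho\otimes 1)\varphi'$ to $\psi$ along the surjection $\rho\otimes 1$, inductively over a Sullivan basis with initial value $\varphi'$, and then take $\varphi$ to be $\ev_1$ of the lift. The difference is only in how the inductive step is packaged: the paper keeps the $1$- and $e_i$-components of the given homotopy and solves for the $e_I$-components with $|I|\geq 2$ explicitly via the homotopy operator $K$ for $\ev_0$ on $A\otimes\Lambda(t,dt)$, whereas you obtain it from the fact that $B\otimes\wedge(t,dt)\to (C\otimes\wedge(t,dt))\times_C B$ is surjective with acyclic kernel $\ker\pi\otimes\ker\varepsilon_0$.
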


\begin{proof}
Choose a CDGA homotopy
\[
H':
\wedge V
\longrightarrow
\mathcal H_W\otimes A\otimes \Lambda(t,dt)
\]
from $(\rho\otimes 1)\varphi'$ to $\psi$. Thus
$
\operatorname{ev}_0H'=(\rho\otimes 1)\varphi'
$
and
$
\ev_1H'=\psi.
$
Here $\ev_i:\Lambda(t,dt)\to \mathbb Q$ denotes the CDGA map given by $\operatorname{ev}_i (t) = i$.

It is enough to construct a CDGA homotopy
\[
H:
\wedge V
\longrightarrow
\mathcal H_T\otimes A\otimes \Lambda(t,dt)
\]
such that
$
\operatorname{ev}_0H=\varphi'
$
and
$
(\rho\otimes 1\otimes 1)H=H'.
$
The homotopy $H$ will be defined on the generators as follows.
For $v\in V$, write
\[
H'(v)
=
1\otimes H'_\emptyset(v)
+
\sum_{i=1}^p e_i\otimes H'_i(v).
\]
Define the components of $H(v)$ by
\[
H(v)
=
1\otimes H_\emptyset(v)
+
\sum_{i=1}^p e_i\otimes H_i(v)
+
\sum_{\substack{I\subsetneq[p]\\ |I|\geq 2}}
e_I\otimes H_I(v),
\]
where
$
H_\emptyset(v)=H'_\emptyset(v)
$
and
$
H_i(v)=H'_i(v)
$
for $i=1,\ldots,p$. It remains to define $H_I(v)$ for $|I|\geq 2$.
The construction proceeds by induction over an ordered basis of $V$ compatible
with the Sullivan filtration. Suppose first that $dv=0$.
Write
\[
        \varphi'(v)=\sum_{I\subsetneq[p]} e_I\otimes \varphi'_I(v).
\]
Then each $\varphi'_I(v)$ is a cocycle. In this case, for $|I|\geq 2$, define
\[
        H_I(v)=\varphi'_I(v)\otimes 1.
\]
Now suppose that $dv\neq 0$. Let $V_{<v}$ be the subspace spanned by the
generators preceding $v$.
By induction, assume that $H$ has been constructed
as a CDGA morphism on $\wedge V_{<v}$ and satisfies
$
\ev_0H=\varphi'
$
and
$
(\rho\otimes 1\otimes 1)H=H'
$
on $\wedge V_{<v}$. Since $dv\in\wedge V_{<v}$, the element $H(dv)$ is
already defined.
For $I\subsetneq[p]$ with $|I|\geq 2$, let $H_I(dv)$ be the
coefficient of $e_I$ in $H(dv)$, and put
\[
        \alpha_I=(-1)^{|e_I|}H_I(dv).
\]
Since $H$ is a CDGA morphism on $\wedge V_{<v}$,
$
        DH(dv)=H(d^2v)=0,
$
where $D$ denotes the differential of $A\otimes\Lambda(t,dt)$. 
Thus $\alpha_I$ is a cocycle in $A\otimes\Lambda(t,dt)$. 
Moreover, the induction hypothesis gives
\[
\operatorname{ev}_0(\alpha_I)
=
(-1)^{|e_I|}\varphi'_I(dv)
=
d_A\varphi'_I(v),
\]
where the last equality follows because $\varphi'$ is a CDGA morphism.

We use the standard
homotopy operator
$
K:A\otimes\Lambda(t,dt)\to A\otimes\Lambda(t,dt)
$
of degree $-1$ associated with evaluation at $t=0$. Thus
\[
        DK+KD=\operatorname{id}-\iota\operatorname{ev}_0
        \quad
        \text{and}
        \quad
        \operatorname{ev}_0K=0,
\]
where $\iota:A\to A\otimes\Lambda(t,dt)$ is given by $a\mapsto a\otimes 1$.
Here $K$ is used only as a linear map on the underlying cochain complex.
Put
\[
        \beta_I=\alpha_I-D(\varphi'_I(v)\otimes 1).
\]
Then $\beta_I$ is a cocycle and $\ev_0(\beta_I)=0$. Hence
$DK(\beta_I)=\beta_I$. Define
\[
H_I(v)
=
\varphi'_I(v)\otimes 1
+
K(\beta_I).
\]
Since $\ev_0K=0$, we have
$
\operatorname{ev}_0H_I(v)=\varphi'_I(v)
$
and
\[
        DH_I(v)
        =
        D(\varphi'_I(v)\otimes 1)+\beta_I
        =
        \alpha_I
        =
        (-1)^{|e_I|}H_I(dv).
\]
Since $\wedge V$ is free, this choice of $H(v)$ extends the partially defined
map uniquely to a graded algebra map.
The identity
$DH(v)=H(dv)$ makes this extension a CDGA morphism.
Continuing the
induction gives the required CDGA homotopy $H$.
Finally, set
\[
        \varphi=\ev_1 H.
\]
Then $\varphi$ is homotopic to $\varphi'$, and
\[
(\rho\otimes 1)\varphi
=
\ev_1 \bigl((\rho\otimes 1\otimes 1)H\bigr)
=
\ev_1 H'
=
\psi.
\]
This completes the proof.
\end{proof}


\section{Defining systems and higher order Whitehead products in mapping spaces}

In this section, we apply the algebraic results of Section 4 to mapping spaces.
Let $X$ be a simply connected finite CW complex, $Y$ a simply connected
space of finite type, and $f\colon X\to Y$ a map. Write
$
        \ell : Y\longrightarrow Y_{\mathbb Q}
$
for the rationalization. Since $X$ is finite, composition with $\ell$ induces a
rationalization
\[
\ell_*:
\map(X,Y;f)
\longrightarrow
\map(X,Y_{\mathbb Q};\ell\circ f);
\]
see \cite[Chapter~II, Theorem~3.11 and the following paragraph]{HMR1975}.
Thus
\[
\pi_n(\map(X,Y;f))\otimes\mathbb Q
\cong
\pi_n(\map(X,Y_{\mathbb Q};\ell\circ f)).
\]
We define rational higher order Whitehead products in $\map(X,Y;f)$ as the
Porter products in the component on the right.

For the rest of this section, we write $Y$ for $Y_{\mathbb Q}$ and $f$ for
$\ell\circ f$. Let $(\wedge V,d)$ be a minimal Sullivan model for $Y$,
$(A,d_A)$ a CDGA model for $X$, and $\phi : (\wedge V,d) \to (A,d_A)$ a Sullivan representative for $f$.
By the Sullivan model description of the rational homotopy groups of mapping
spaces due to Lupton--Smith~\cite[Theorem~2.1]{LS2007}, we use the standard
identification
\begin{equation}\label{eq:adjoint-derivation-isomorphism}
        \Phi:\pi_n(\map(X,Y;f))\otimes\mathbb{Q}
        \cong
        H^{-n}(\Der(\wedge V,A;\phi)).
\end{equation}
We use cohomological grading, so that derivations corresponding to $\pi_n$
have degree $-n$.
To recall how this isomorphism is defined, let
$
\alpha\in \pi_n(\map(X,Y;f))\otimes\mathbb{Q}
$
be represented by a map
$
g:S^n\to\map(X,Y;f).
$
Let
\[
        \ad_g:S^n\times X\longrightarrow Y
\]
be the adjoint map. If
$
\varphi_g:\wedge V\to H^*(S^n;\mathbb{Q})\otimes A
$
is a Sullivan representative for $\ad_g$, then we may write
\[
        \varphi_g(v)
        =
        1\otimes \phi(v)+e_n\otimes \theta_g(v)
        \qquad (v\in V),
\]
where $e_n$ is the fundamental cohomology class of $S^n$. Then $\theta_g$ is a
$\phi$-derivation cycle of degree $-n$, and $\Phi(\alpha)$ is represented by
$\theta_g$. By abuse of notation, we use the same symbol for a cycle and for
its cohomology class when no confusion can occur.

\begin{defn}\label{def:defining-system}
Let $\theta_i\in\Der^{-n_i}(\wedge V,A;\phi)$ be cycles for
$i=1,\ldots,p$. For $I\subseteq[p]$, set
$n_I=\sum_{i\in I}n_i$.
A \emph{defining system} for $\theta_1,\ldots,\theta_p$ is a family
\[
        \Theta=\{\theta_I\}_{\emptyset\neq I\subsetneq[p]}
\]
of $\phi$-derivations
$
\theta_I\in \Der^{-n_I}(\wedge V,A;\phi)
$
such that $\theta_{\{i\}}=\theta_i$ and, for every subset
$I\subsetneq[p]$ with $|I|\geq 2$,
\[
\delta\theta_I
=
-
\sum_{\substack{\{I_1,\ldots,I_k\}\in\Pi(I)\\ k\geq 2}}
[
\theta_{I_1},\ldots,\theta_{I_k}
].
\]
Here $\Pi(I)$ denotes the set of all partitions of $I$ into non-empty subsets.
For each partition, the blocks are ordered by
$
\min I_1<\cdots<\min I_k.
$
The brackets are taken with respect to $\phi$.
\end{defn}

For a defining system
$\Theta=\{\theta_I\}_{\emptyset\neq I\subsetneq[p]}$, define its associated bracket by
\[
[\theta_1,\ldots,\theta_p]_\Theta
=
\sum_{\substack{
\{I_1,\ldots,I_k\}\in\Pi([p])\\
k\geq2}}
[\theta_{I_1},\ldots,\theta_{I_k}].
\]
Here the same ordering convention for partitions is used.
The associated bracket has degree $1-N$, where
$N=n_1+\cdots+n_p$.
It will be shown in the proof of Theorem~\ref{thm:main} that this bracket is
a cycle. We then use the same symbol for its cohomology class.

Under the rationalization convention above,
$[\alpha_1,\ldots,\alpha_p]_{\mathrm{Wh}}$ is regarded as a subset of
$\pi_{N-1}(\map(X,Y;f))\otimes\mathbb Q$.
The following theorem is the main result of this paper.

\begin{thm}\label{thm:main}
Let $p\geq2$. Suppose that integers $n_i\geq2$, classes
$
        \alpha_i\in
        \pi_{n_i}(\map(X,Y;f))\otimes\mathbb Q,
$
and cycles
$
        \theta_i\in\Der^{-n_i}(\wedge V,A;\phi)
$
representing $\Phi(\alpha_i)$ are given for $i=1,\ldots,p$.
If the higher order Whitehead product
$
        [\alpha_1,\ldots,\alpha_p]_{\mathrm{Wh}}
$
is non-empty, then
\[
\Phi\bigl([\alpha_1,\ldots,\alpha_p]_{\mathrm{Wh}}\bigr)
=
\left\{
[\theta_1,\ldots,\theta_p]_\Theta
\ \middle|\
\Theta \text{ is a defining system for } \theta_1,\ldots,\theta_p
\right\}.
\]
\end{thm}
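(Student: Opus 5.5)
We prove both inclusions by passing through adjoints over the fat wedge. A representative $g_i:S^{n_i}\to\map(X,Y;f)$ of $\alpha_i$ gives the wedge map $g:W\to\map(X,Y;f)$. Its adjoint $W\times X\to Y$ restricted appropriately, or more precisely $W\ltimes X$ glued along $f$, has Sullivan representative
\[
\psi:\wedge V\to\mathcal H_W\otimes A,\qquad \psi(v)=1\otimes\phi(v)+\sum_i e_i\otimes\theta_i(v).
\]
This $\psi$ is a CDGA morphism because each $\theta_i$ is a $\delta$-cycle and all products $e_ie_j$ vanish in $\mathcal H_W$. Since $\mathcal H_W\otimes A$ and $\mathcal H_T\otimes A$ are models for $W\times X$ and $T\times X$, and maps into a rational component correspond to homotopy classes of CDGA maps restricting to the fixed $\phi$ on $X$, an extension $\widetilde g:T\to\map(X,Y;f)$ of $g$ corresponds to a CDGA morphism $\varphi':\wedge V\to\mathcal H_T\otimes A$ with $(\rho\otimes1)\varphi'\simeq\psi$. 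For this we use the hypotheses: $X$ finite, $Y$ rational of finite type, and the Lupton--Smith type correspondence behind \eqref{eq:adjoint-derivation-isomorphism}. Lemma~\ref{lem:strict-cohomology-model} then replaces $\varphi'$ by a homotopic strict $\varphi$ with $(\rho\otimes1)\varphi=\psi$. In particular $\varphi_\emptyset=\phi$ and $\varphi_{\{i\}}=\theta_i$.

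\emph{Inclusion ``$\subseteq$''.} Given a strict $\varphi$ as above, Proposition~\ref{prop:differential-identity} shows that $\theta_I=(-1)^{\kappa(I)}\varphi_I$ for $\emptyset\ne I\subsetneq[p]$ is a defining system $\Theta$. It remains to compute $\Phi(\widetilde g\circ w)$. The adjoint of $\widetilde g\circ w$ is $(w\times1)$ composed with the adjoint of $\widetilde g$. To model it, compose $\varphi$ with a model of $w$. Since $\mathcal H_T$ is only a cohomology model, the cleanest route is to lift $\varphi$ to $\mathcal M_T\otimes A$ and use Proposition~\ref{prop:model_universal-HOP}. Alternatively, extend $\varphi$ over $\mathcal H_P$ up to the obstruction: the formula of Proposition~\ref{prop:component-decomposition} for $I=[p]$ fails to define a CDGA map $\wedge V\to\mathcal H_P\otimes A$ exactly by the cocycle
\[
\sum_{k\ge2}(-1)^{\chi}\langle\varphi_{I_1},\dots,\varphi_{I_k}\rangle_\phi\circ d .
\]
We then identify this obstruction, via $du_{[p]}=e_{[p]}$ and the sign $(-1)^{N-1+\kappa}$ of Proposition~\ref{prop:model_universal-HOP}, with the $S^{N-1}$-coefficient of the composite representative. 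The sign bookkeeping mirrors the proof of Proposition~\ref{prop:differential-identity}, now applied with $I=[p]$: the congruence $\kappa([p])\equiv\sum\kappa(I_j)+\chi$ converts the obstruction to $\pm[\theta_1,\dots,\theta_p]_\Theta$, and the $(-1)^{N-1+\kappa}$ must cancel to give exactly the bracket. The same computation shows the bracket is a cycle, being the coefficient of a CDGA map to $H^*(S^{N-1})\otimes A$. This identification, done rigorously at the level of Sullivan models $\mathcal M_T$ versus $\mathcal H_T$ with the correct signs, is the main obstacle. I would handle it by taking the formality map $q_T$ and noting that $\omega$ factors through the fiber generator $u_{[p]}$ only.

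\emph{Inclusion ``$\supseteq$''.} Conversely, given a defining system $\Theta$, define $\varphi:\wedge V\to\mathcal H_T\otimes A$ on generators by
\[
\varphi(v)=\sum_{I}e_I\otimes(-1)^{\kappa(I)}\theta_I(v),
\]
with $\theta_\emptyset=\phi$, and extend multiplicatively. Running Proposition~\ref{prop:component-decomposition} and Proposition~\ref{prop:differential-identity} in reverse, the defining system equations are exactly the conditions $d\varphi(v)=\varphi(dv)$ on generators, since the algebra structure of $\mathcal H_T$ kills $e_{[p]}$. Hence $\varphi$ is a CDGA morphism with $(\rho\otimes1)\varphi=\psi$. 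It realizes an extension $\widetilde g$ of a map homotopic to $g$, and by homotopy extension over $W\subset T$ we may take it to extend $g$ itself. The computation from the first inclusion then gives $\Phi(\widetilde g\circ w)=[\theta_1,\dots,\theta_p]_\Theta$.

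Independence from the chosen cycles $\theta_i$ within their classes is automatic, since the statement fixes the $\theta_i$ and both sides are computed from them.
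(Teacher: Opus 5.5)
Your proposal follows essentially the same route as the paper: strictify via Lemma~\ref{lem:strict-cohomology-model}, read off the defining system from Proposition~\ref{prop:differential-identity}, lift along $q_T\otimes 1$ to $\mathcal M_T\otimes A$, compare $e_{[p]}$-coefficients using $du_{[p]}=e_{[p]}$ and Proposition~\ref{prop:model_universal-HOP} (whose sign $(-1)^{N-1+\kappa([p])}$ cancels against the one from the coefficient comparison), and for the converse build $\varphi_\Theta$ and use homotopy extension over $W\subset T$. The one detail you leave implicit is that the paper normalizes the lift to have no pure $e_{[p]}$-term by subtracting $d(u_{[p]}\otimes\mu_v)$; without this normalization the $u_{[p]}$-coefficient changes only by a boundary $\pm\delta\mu$, so the cohomology class is unaffected.
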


\begin{proof}
Let
$
\beta\in[\alpha_1,\ldots,\alpha_p]_{\mathrm{Wh}}.
$
Then there is an extension
\[
\widetilde g:
T=T(S^{n_1},\ldots,S^{n_p})
\longrightarrow
\map(X,Y;f)
\]
of the map
$
g:W\to \map(X,Y;f)
$
determined by $\alpha_1,\ldots,\alpha_p$ such that $\beta$ is represented by $\widetilde g\circ w$, where $w:S^{N-1}\to T$ is the universal higher order Whitehead product.

Let
$
\ad_g:W\times X\to Y
$
and
$
\ad_{\widetilde g}:T\times X\to Y
$
be the adjoint maps of $g$ and $\widetilde g$, respectively. Then the diagram
\[
\xymatrix{
T \times X \ar[r]^-{\ad_{\widetilde g}} & Y \\
W \times X \ar@{^{(}->}[u] \ar[ru]_-{\ad_g} &
}
\]
commutes up to homotopy.
Define
\[
\psi
=
1\otimes\phi
+
\sum_{i=1}^p e_i\otimes\theta_i
:
\wedge V \longrightarrow  \mathcal H_W\otimes A.
\]
Since each $\theta_i$ is a $\phi$-derivation cycle and all products of
positive-degree elements in $\mathcal H_W$ vanish, the map $\psi$ is a CDGA
morphism. By the construction of the identification $\Phi$ in
\eqref{eq:adjoint-derivation-isomorphism}, $\psi$ is homotopic to a Sullivan
representative for $\ad_g$; compare \cite[Section~2]{Na2014}.

Let
$
\varphi':\wedge V\to \mathcal H_T\otimes A
$
be a Sullivan representative for $\ad_{\widetilde g}$. Since the restriction of
$\ad_{\widetilde g}$ to $W\times X$ is homotopic to $\ad_g$, the map
$
(\rho\otimes 1)\varphi'
$
is homotopic to $\psi$. By Lemma~\ref{lem:strict-cohomology-model}, we may
replace $\varphi'$ by a homotopic Sullivan representative
\[
\varphi:\wedge V\longrightarrow \mathcal H_T\otimes A
\]
such that
$
(\rho\otimes 1)\varphi=\psi.
$

Recall the quasi-isomorphism
$q_T:\mathcal M_T\to\mathcal H_T$
introduced in Section~3. It sends each $e_i$ to its cohomology class and all
other generators to zero.
By the lifting lemma
\cite[Lemma~12.4]{FHT}, there exists a lift
$
\widetilde\varphi:\wedge V\to \mathcal M_T\otimes A
$
such that the following diagram is strictly commutative:
\[
\xymatrix{
& \mathcal M_T\otimes A \ar[d]^-{q_T\otimes 1} \\
\wedge V \ar[r]^-{\varphi} \ar[ru]^-{\widetilde\varphi}
& \mathcal H_T\otimes A.
}
\]

The lift may be chosen so that no pure $e_{[p]}$-term occurs in
$\widetilde\varphi(v)$ for $v\in V$.
Indeed, choose an ordered basis of $V$ compatible with the Sullivan
filtration, and construct the lift inductively.
Suppose that the lift has been defined on $\wedge V_{<v}$.
The usual lifting argument first gives a preliminary value for
$\widetilde\varphi(v)$ satisfying
\[
d\widetilde\varphi(v)=\widetilde\varphi(dv),
\qquad
(q_T\otimes1)\widetilde\varphi(v)=\varphi(v).
\]
If its pure $e_{[p]}$-component is $e_{[p]}\otimes\mu_v$, replace
$\widetilde\varphi(v)$ by
\[
\widetilde\varphi(v)-d(u_{[p]}\otimes\mu_v).
\]
Since
$
d(u_{[p]}\otimes\mu_v)
=
e_{[p]}\otimes\mu_v
+
(-1)^{N-1}u_{[p]}\otimes d_A\mu_v$,
this replacement removes the pure $e_{[p]}$-component. It does not change
the differential of $\widetilde\varphi(v)$, and it does not change its image
under $q_T\otimes1$, since $q_T(u_{[p]})=0$.
Continuing the induction gives a lift with the required property.
Therefore we may write
\[
\widetilde\varphi(v)
=
1\otimes\phi(v)
+
\sum_{\emptyset\neq I\subsetneq[p]}
e_I\otimes\varphi_I(v)
+
u_{[p]}\otimes\xi(v)
+
R(v),
\]
where $R(v)$ is the sum of all remaining monomials in
$\mathcal M_T\otimes A$. Only a pure $u_{[p]}$-term can contribute to the
pure $e_{[p]}$-component, and that term has already been separated.
For each non-empty proper subset $I\subsetneq[p]$, let $\theta_I$ be the
$\phi$-derivation determined by
\[
        \theta_I(v)=(-1)^{\kappa(I)}\varphi_I(v)
        \qquad (v\in V).
\]
Then
$
\Theta=\{\theta_I\}_{\emptyset\neq I\subsetneq[p]}
$
is a defining system for $\theta_1,\ldots,\theta_p$ by
Proposition~\ref{prop:differential-identity}. 
Since $\kappa(\{i\})=0$, the equality
$
(\rho\otimes 1)\varphi=\psi
$
gives $\theta_{\{i\}}=\theta_i$ for $i=1,\ldots,p$.

Since $\widetilde\varphi$ is a CDGA morphism, comparing the coefficient of
$e_{[p]}$ gives
\begin{align*}
\xi(v)
&=
\sum_{\substack{\{I_1,\ldots,I_k\}\in\Pi([p])\\ k\geq 2}}
(-1)^{\chi(I_1,\ldots,I_k)}
\langle
\varphi_{I_1},\ldots,\varphi_{I_k}
\rangle_{\phi}(dv)                                      \\
&=
(-1)^{N+1+\kappa([p])}
[\theta_1,\ldots,\theta_p]_\Theta(v).
\end{align*}

Let
$
\omega:\mathcal M_T\to\mathcal M_{S^{N-1}}
$
be a Sullivan representative for $w$.
Proposition \ref{prop:model_universal-HOP}
gives
\[
        \omega(u_{[p]})
        =
        (-1)^{N-1+\kappa([p])}e
        \quad
        \text{and}
        \quad
        Q(\omega)|_{V_P^{N-1}}=0.
\]
Let
$
q_S:\mathcal M_{S^{N-1}}
\to H^*(S^{N-1};\mathbb Q)
$
be the standard quasi-isomorphism, and write $q_S(e)=e_{N-1}$.
Since $q_S$ also kills all decomposable and higher-degree terms, only the
$u_{[p]}$-term contributes.
Hence
\[
\begin{aligned}
\bigl((q_S\circ\omega)\otimes1\bigr)\widetilde\varphi(v)
&=
1\otimes\phi(v)
+
(-1)^{N-1+\kappa([p])}
e_{N-1}\otimes\xi(v)
\\
&=
1\otimes\phi(v)
+
e_{N-1}\otimes
[\theta_1,\ldots,\theta_p]_\Theta(v).
\end{aligned}
\]
By the definition of the identification
\eqref{eq:adjoint-derivation-isomorphism}, we obtain
\[
        \Phi(\beta)
=
[\theta_1,\ldots,\theta_p]_\Theta.
\]

Conversely, let
$
\Theta=\{\theta_I\}_{\emptyset\neq I\subsetneq[p]}
$
be a defining system for $\theta_1,\ldots,\theta_p$. 
Define a graded algebra
map
$
\varphi_\Theta:\wedge V\to\mathcal H_T\otimes A
$
by
\[
\varphi_\Theta(v)
=
1\otimes\phi(v)
+
\sum_{\emptyset\neq I\subsetneq[p]}
(-1)^{\kappa(I)}e_I\otimes\theta_I(v)
\qquad (v\in V).
\]
The defining system identities imply, by the same coefficient calculation as
in Proposition~\ref{prop:differential-identity}, that
$
        d\varphi_\Theta(v)=\varphi_\Theta(dv)
$
for $v\in V$. Hence $\varphi_\Theta$ is a CDGA morphism.

By the Sullivan-de Rham correspondence, $\varphi_\Theta$ represents a rational
homotopy class of maps
$
g'_\Theta:T\times X\to Y.
$
Let
\[
\widetilde g_\Theta:T\longrightarrow \map(X,Y;f)
\]
be the adjoint map of $g'_\Theta$. Since $\kappa(\{i\})=0$, we have
\[
(\rho\otimes 1)\varphi_\Theta
=
1\otimes\phi
+
\sum_{i=1}^p e_i\otimes\theta_i
=
\psi.
\]
Thus the restriction of $\widetilde g_\Theta$ to $W$ is homotopic to $g$.
Since the inclusion $W\hookrightarrow T$ is a cofibration, the homotopy
extension property allows us to replace $\widetilde g_\Theta$ by a homotopic
map whose restriction to $W$ is exactly $g$. We keep the same notation for
this map.
Hence the composite
\[
        \beta_\Theta=\widetilde g_\Theta\circ w
\]
represents an element of
$[\alpha_1,\ldots,\alpha_p]_{\mathrm{Wh}}$.

Choose a lift
$
\widetilde\varphi_\Theta:\wedge V\to\mathcal M_T\otimes A
$
of $\varphi_\Theta$ with respect to $q_T\otimes 1$, again with no pure
$e_{[p]}$-term in $\widetilde\varphi_\Theta(v)$ for $v\in V$.
Repeating the coefficient comparison and applying $q_S\otimes1$ gives
\[
\bigl((q_S\circ\omega)\otimes1\bigr)
\widetilde\varphi_\Theta(v)
=
1\otimes\phi(v)
+
e_{N-1}\otimes
[\theta_1,\ldots,\theta_p]_\Theta(v).
\]
Since
$
\bigl((q_S\circ\omega)\otimes1\bigr)
\widetilde\varphi_\Theta$ is a CDGA morphism, the coefficient of $e_{N-1}$ is a
$\phi$-derivation cycle. Hence
$
\delta[\theta_1,\ldots,\theta_p]_\Theta=0.
$
By the definition of the identification \eqref{eq:adjoint-derivation-isomorphism},
we obtain
\[
\Phi(\beta_\Theta)
=
[\theta_1,\ldots,\theta_p]_\Theta.
\]
This proves the theorem.
\end{proof}

When $X=*$, the mapping space is naturally identified with $Y$, and we may take $A=\mathbb Q$ and $\phi$ to be the augmentation of $\wedge V$.
In this case, $\Der(\wedge V,\mathbb Q;\phi)$ is naturally identified with $\Hom(V,\mathbb Q)$, and the identification $\Phi$ agrees with the standard identification
$
\pi_n(Y)\otimes\mathbb Q
\cong
\Hom(V^n,\mathbb Q).
$
Let $\mu\in V^{N-1}$.
If every monomial in $d\mu$ has word length at least $p$, then every term in the associated bracket arising from a partition with fewer than $p$ blocks vanishes on $\mu$.
Hence, when evaluated on $\mu$, only the direct bracket
$[\theta_1,\ldots,\theta_p]$ contributes.
In this way, Theorem~\ref{thm:main} recovers the formula of Andrews and Arkowitz \cite[Theorem~5.4]{AA1978} up to the sign convention explained in Remark~\ref{rem:sign}.
Without this assumption, the additional terms involving the higher components of a defining system describe the possible indeterminacy of the higher order Whitehead product.

Another special case of Theorem~A is obtained by taking $X=Y$ and
$f=\id_Y$, where $Y$ is a simply connected finite CW complex.
Then $\map(Y,Y;\id_Y)=\aut_1(Y)$, which is a topological monoid under
composition.
Let $g:W\to\aut_1(Y)$ be the map determined by
$\alpha_1,\ldots,\alpha_p$, and write
$g_i:S^{n_i}\to\aut_1(Y)$ for its restriction to the $i$-th sphere.
Define $\widehat g:P\to\aut_1(Y)$ by
\[
        \widehat g(s_1,\ldots,s_p)
        =
        g_1(s_1)\circ\cdots\circ g_p(s_p).
\]
Since each $g_i$ is based at $\id_Y$, the restriction of $\widehat g$
to $W$ is $g$. Hence $\widetilde g=\widehat g|_T$ is an extension of
$g$ over the fat wedge. Moreover,
\[
        \widetilde g\circ w
        =
        \widehat g\circ j_T\circ w
        \simeq *,
\]
since $j_T\circ w$ is null-homotopic. Therefore
\[
        0\in[\alpha_1,\ldots,\alpha_p]_{\mathrm{Wh}}.
\]

This vanishing also has a direct interpretation in the derivation
complex. Let $(\wedge V,d)$ be a minimal Sullivan model for $Y$, and
take $A=(\wedge V,d)$ and $\phi=\id_{\wedge V}$. By the same
strictification argument as in
Lemma~\ref{lem:strict-cohomology-model}, we may choose a Sullivan
representative
\[
        \varphi^P:
        (\wedge V,d)
        \longrightarrow
        \mathcal{H}_P\otimes(\wedge V,d)
\]
for the adjoint of $\widehat g$. With respect to the basis
$\{e_I\}_{I\subseteq[p]}$ of $\mathcal{H}_P$, write
\[
        \varphi^P
        =
        \sum_{I\subseteq[p]}e_I\otimes\varphi_I,
\]
where $\varphi_{\emptyset}=\phi$ and
$\varphi_{\{i\}}|_V=\theta_i|_V$ for $i=1,\ldots,p$. For each non-empty
subset $I\subseteq[p]$, let $\theta_I$ be the $\phi$-derivation
determined by
$
        \theta_I(v)=(-1)^{\kappa(I)}\varphi_I(v)
$
for $v\in V$. The same coefficient calculation as in
Proposition~\ref{prop:differential-identity} shows that the components
indexed by the non-empty proper subsets of $[p]$ form a defining
system
$
        \Theta=\{\theta_I\}_{\emptyset\neq I\subsetneq[p]},
$
while the top component satisfies
\[
        \delta\theta_{[p]}
        =
        -[\theta_1,\ldots,\theta_p]_{\Theta}.
\]
Thus the associated bracket is exact and represents zero in
cohomology. By Theorem~\ref{thm:main}, this algebraically recovers
\[
        0\in[\alpha_1,\ldots,\alpha_p]_{\mathrm{Wh}}.
\]

Buijs and Murillo considered the Sullivan model $(\wedge S_\phi,\bar d)$ of a mapping-space component obtained from the Brown--Szczarba model.
They showed that the $j$-th homogeneous part $\bar d_j$ of its differential
corresponds to the bracket $[\varphi_1,\ldots,\varphi_j]$ of $j$ derivations \cite[Theorem~15]{BM2008}.
These are the brackets recalled in Subsection~4.1.
Definition~\ref{def:defining-system} uses them in the differential identities satisfied by the components $\theta_I$ of a defining system.
The associated bracket of a defining system is then obtained by summing the brackets corresponding to all partitions of $[p]$ into at least two blocks.
By allowing the defining system to vary,
Theorem~\ref{thm:main} describes the entire higher order Whitehead product whenever it is non-empty, including its indeterminacy.
Example~\ref{ex:quadratic-cp2} illustrates this point: the direct third-order bracket vanishes, while a suitable defining system gives a non-zero associated bracket.

\section{Examples}

In this section, we give examples that illustrate how defining systems affect
higher order Whitehead products in mapping spaces.

The first example is a basic one. It shows that a mapping space can have a
non-trivial third-order Whitehead product even though the defining relation in
the target has order five. In the minimal model of $\mathbb{C}P^4$, this
relation is given by $x^5$. For the inclusion
$\mathbb{C}P^2\to \mathbb{C}P^4$, however, two powers of $x$ are already
supplied by the map $f$. As a result, a third-order bracket appears in the
mapping space.

\begin{ex}\label{ex:cp2-cp4-inclusion}
Let $X=\mathbb{C}P^2$ and $Y=\mathbb{C}P^4$.
Put
\[
        A=H^*(\mathbb{C}P^2;\mathbb{Q})
        =\mathbb{Q}[a]/(a^3),
        \qquad |a|=2.
\]
Regard $A$ as a CDGA with zero differential.
A minimal Sullivan model for $Y$ is
$(\wedge V,d)=(\wedge(x,y),d)$, where
$|x|=2$, $|y|=9$, $dx=0$, and $dy=x^5$.
Let $f:\mathbb{C}P^2\hookrightarrow \mathbb{C}P^4$ be the standard inclusion.
A Sullivan representative for $f$ is the CDGA morphism
$\phi:\wedge V\to A$ given by $\phi(x)=a$ and $\phi(y)=0$.

Define a derivation $\theta\in \Der^{-2}(\wedge V,A;\phi)$ by
$\theta(x)=1$ and $\theta(y)=0$. Then $\theta$ is a cycle.
The second-order bracket is
\[
[\theta,\theta](y)
=
-\langle \theta,\theta\rangle_\phi(x^5)
=
-20 a^3
=
0.
\]
Thus, for the three inputs
$\theta_1=\theta_2=\theta_3=\theta$, all second-order brackets vanish.
Hence the family $\Theta$ with $\theta_{\{i\}}=\theta$ for $i=1,2,3$ and
$\theta_{12}=\theta_{13}=\theta_{23}=0$ is a defining system.
For this defining system,
\[
[\theta_1,\theta_2,\theta_3]_\Theta
=
[\theta,\theta,\theta].
\]

Evaluating this on $y$ yields
$
[\theta,\theta,\theta](y)=-60a^2.
$
Thus $[\theta,\theta,\theta]$ is non-zero as a derivation.
It also represents a non-trivial cohomology class. Indeed, if
$\xi\in \Der^{-6}(\wedge V,A;\phi)$, then
$
(\delta\xi)(y)=-\xi(x^5)=0,
$
since $\xi(x)\in A^{-4}=0$.
Therefore no boundary in degree $-5$ can take a non-zero value on $y$.
Let $\zeta \in \Der^{-5}(\wedge V,A;\phi)$ be defined by
$\zeta(x)=0$ and $\zeta(y)=a^2$. Then $\zeta$ is a cycle, and the algebraic computation
above gives
\[
        [\theta,\theta,\theta]=-60\zeta.
\]

Let
\[
\alpha \in \pi_2(\map(\mathbb{C}P^2,\mathbb{C}P^4;f))\otimes\mathbb{Q},
\qquad
\beta \in \pi_5(\map(\mathbb{C}P^2,\mathbb{C}P^4;f))\otimes\mathbb{Q}
\]
correspond to the cohomology classes represented by $\theta$ and $\zeta$, respectively.
Since $[\theta,\theta]=0$, the corresponding rational second-order Whitehead
product vanishes. Hence the third-order Whitehead product
$[\alpha,\alpha,\alpha]_{\mathrm{Wh}}$ is non-empty.

Moreover, in this example, we see that $\Der^{-4}(\wedge V,A;\phi)=0$ for degree reasons.
Hence the above trivial defining system is the only defining system for the three
inputs $\theta,\theta,\theta$.
Therefore, by Theorem~\ref{thm:main}, the rational third-order Whitehead product
$[\alpha,\alpha,\alpha]_{\mathrm{Wh}}$ consists of the single rational homotopy
class $-60\beta$.
In particular, this product is non-zero.

We now give a geometric representative for $\alpha$.
Define
\[
g_\alpha:\mathbb CP^1
\longrightarrow
\map(\mathbb CP^2,\mathbb CP^4;f)
\]
by taking its adjoint $F_\alpha:\mathbb{C}P^1\times \mathbb{C}P^2 \to \mathbb{C}P^4$
to be
\[
F_\alpha([s:t],[\lambda_0:\lambda_1:\lambda_2])
=
[s\lambda_0 : s\lambda_1: s\lambda_2+t\lambda_0 : t\lambda_1 : t\lambda_2].
\]
At the base point $[1:0]\in \mathbb{C}P^1$, this restricts to the standard
inclusion $f$. Thus $g_\alpha$ is based at $f$.
Let $h$ and $a$ be the standard generators of
$H^2(\mathbb{C}P^1;\mathbb{Q})$ and
$H^2(\mathbb{C}P^2;\mathbb{Q})$, respectively.
Since
$H^2(\mathbb{C}P^1\times \mathbb{C}P^2;\mathbb{Q})= \mathbb{Q}(h\otimes 1)\oplus \mathbb{Q}(1\otimes a)$,
we can write
\[
F_\alpha^*(x)=c_1(h\otimes 1)+c_2(1\otimes a) \in H^2(\mathbb{C}P^1\times \mathbb{C}P^2;\mathbb{Q})
\]
for some $c_1,c_2\in\mathbb{Q}$.
If $[\lambda_0:\lambda_1:\lambda_2]=[1:0:0]$, then
$F_\alpha([s:t],[1:0:0])=[s:0:t:0:0]$,
which is a coordinate linear inclusion
$\mathbb{C}P^1\hookrightarrow \mathbb{C}P^4$; hence $c_1=1$.
If $[s:t]=[1:0]$, then
$
F_\alpha([1:0],[\lambda_0:\lambda_1:\lambda_2])
=
[\lambda_0:\lambda_1:\lambda_2:0:0]$, which is the standard inclusion
$\mathbb{C}P^2\hookrightarrow \mathbb{C}P^4$; hence $c_2=1$.
Therefore
\[
F_\alpha^*(x)=h\otimes 1+1\otimes a.
\]
Hence a Sullivan representative for $F_\alpha$ is given by
\[
x\longmapsto h\otimes 1+1\otimes a,\qquad y\longmapsto 0.
\]
The coefficient of $h$ is exactly the derivation $\theta$. Hence $g_\alpha$
represents the rational homotopy class $\alpha$.
\end{ex}

The next example shows that defining systems may depend on the component of
the mapping space. For the same spaces $X$ and $Y$, the constant component
admits a trivial defining system. In another component, the second-order
brackets are non-zero but exact, so the higher components of a defining system
must be non-zero. Different choices of these components produce non-trivial
indeterminacy.

\begin{ex}\label{ex:cp4-indeterminacy}
Let $X=\mathbb{C}P^4$. Then
$
A=H^*(\mathbb{C}P^4;\mathbb{Q})=\mathbb{Q}[a]/(a^5)
$
with $|a|=2$.
Regard $A$ as a CDGA with zero differential.
Let $Y$ be a rational space with minimal Sullivan model
\[
(\wedge V,d)=(\wedge(x_1,x_2,x_3,z),d),
\]
where
$
|x_1|=|x_2|=|x_3|=4
$
and $|z|=11$, with
$dx_i=0 \ (i=1,2,3)$, $dz=x_1x_2x_3$.

For $\varepsilon\in\{0,1\}$, define a CDGA morphism
\[
\phi_\varepsilon:\wedge V\longrightarrow A
\]
by $\phi_\varepsilon(x_i)=\varepsilon a^2$,  $\phi_\varepsilon(z)=0$.
This is well-defined since $a^6=0$ in $A$.
Let $f_\varepsilon:X\to Y$ be the map represented by
$\phi_\varepsilon$. In particular, $f_0$ is the constant map.

For $i=1,2,3$, define
$
\theta_i^\varepsilon
\in
\Der^{-2}(\wedge V,A;\phi_\varepsilon)
$
by
\[
\theta_i^\varepsilon(x_j)
=
\begin{cases}
a & (i=j),\\
0 & (i\neq j),
\end{cases}
\qquad
\theta_i^\varepsilon(z)=0.
\]
We denote by $\delta_\varepsilon$ the differential of
$\Der(\wedge V,A;\phi_\varepsilon)$.
Then each $\theta_i^\varepsilon$ is a cycle. Indeed,
\[
(\delta_\varepsilon\theta_i^\varepsilon)(z)
=
-\theta_i^\varepsilon(x_1x_2x_3)
=
-\varepsilon^2a^5
=
0.
\]
A direct computation gives, for $1\leq i<j\leq3$,
\[
[\theta_i^\varepsilon,\theta_j^\varepsilon](z)
=
-\varepsilon a^4.
\]

We first consider the component of $f_0$.
In this case,
$
[\theta_i^0,\theta_j^0]=0
$
for
$1\leq i<j\leq3$.
Hence the family $\Theta_0$ with
\[
\theta_{\{i\}}^0=\theta_i^0
\quad(i=1,2,3),
\qquad
\theta_{12}^0=\theta_{13}^0=\theta_{23}^0=0
\]
is a defining system.
More generally, for any defining system $\Theta$ for
$\theta_1^0,\theta_2^0,\theta_3^0$, all mixed terms in the associated bracket
vanish. Indeed, they vanish on the $x_i$ since $dx_i=0$, and their values on
$z$ contain a factor $\phi_0(x_\ell)=0$.
Therefore
\[
[\theta_1^0,\theta_2^0,\theta_3^0]_\Theta
=
[\theta_1^0,\theta_2^0,\theta_3^0],
\]
and
$[\theta_1^0,\theta_2^0,\theta_3^0]_\Theta(z)
=
-a^3.
$

We next consider the component of $f_1$.
To simplify notation, write
\[
\phi=\phi_1,
\qquad
\theta_i=\theta_i^1,
\qquad
\delta=\delta_1.
\]
Then
\[
[\theta_1,\theta_2](z)
=
[\theta_1,\theta_3](z)
=
[\theta_2,\theta_3](z)
=
-a^4
\neq0.
\]
Thus the second-order brackets are non-zero as derivations. However, they
are exact.
For each $1\leq i<j\leq3$,  choose
$
c_{ij,1},c_{ij,2},c_{ij,3}\in\mathbb{Q}
$
such that
\[
        c_{ij,1}+c_{ij,2}+c_{ij,3}=1.
\]
Define
$
\eta_{ij}\in \Der^{-4}(\wedge V,A;\phi)
$
by
$
\eta_{ij}(x_\ell)=c_{ij,\ell}
$
for $\ell=1,2,3$, and by $\eta_{ij}(z)=0$. 
Then
\[
(\delta\eta_{ij})(z)
=
-\eta_{ij}(dz)
=
-\eta_{ij}(x_1x_2x_3)
=
-(c_{ij,1}+c_{ij,2}+c_{ij,3})a^4
=
-a^4.
\]
Hence $\delta\eta_{ij}=[\theta_i,\theta_j]$.

Following our convention for defining systems, set
$
\theta_{ij}=-\eta_{ij}.
$
Then
$
\delta\theta_{ij}=-[\theta_i,\theta_j].
$
Therefore
\[
\Theta_1
=
\{\theta_1,\theta_2,\theta_3,
  \theta_{12},\theta_{13},\theta_{23}\}
\]
is a defining system for $\theta_1,\theta_2,\theta_3$.
Its associated third-order bracket is
\[
[\theta_1,\theta_2,\theta_3]_{\Theta_1}
=
[\theta_1,\theta_2,\theta_3]
+
[\theta_{12},\theta_3]
+
[\theta_{13},\theta_2]
+
[\theta_1,\theta_{23}].
\]
Evaluating each term on $z$, we obtain
\begin{align*}
[\theta_1,\theta_2,\theta_3](z)
&=
-a^3,\\
[\theta_{12},\theta_3](z)
&=
(c_{12,1}+c_{12,2})a^3
=
(1-c_{12,3})a^3,\\
[\theta_{13},\theta_2](z)
&=
(c_{13,1}+c_{13,3})a^3
=
(1-c_{13,2})a^3,\\
[\theta_1,\theta_{23}](z)
&=
(c_{23,2}+c_{23,3})a^3
=
(1-c_{23,1})a^3.
\end{align*}
Thus
\begin{align*}
[\theta_1,\theta_2,\theta_3]_{\Theta_1}(z)
&=
-a^3
+
(1-c_{12,3})a^3
+
(1-c_{13,2})a^3
+
(1-c_{23,1})a^3\\
&=
\bigl(
2-c_{12,3}-c_{13,2}-c_{23,1}
\bigr)a^3.
\end{align*}
For any $\lambda\in\mathbb{Q}$, the parameters can be chosen so that
$
2-c_{12,3}-c_{13,2}-c_{23,1}=\lambda.
$
Hence the associated bracket can be chosen so that
\[
[\theta_1,\theta_2,\theta_3]_{\Theta_1}(z)
=
\lambda a^3.
\]

For $\varepsilon\in\{0,1\}$ and $\lambda\in\mathbb{Q}$, let
$
\zeta_\lambda^\varepsilon
\in
\Der^{-5}(\wedge V,A;\phi_\varepsilon)
$
be the derivation determined by
$
\zeta_\lambda^\varepsilon(x_i)=0
$
and
$
\zeta_\lambda^\varepsilon(z)=\lambda a^3.
$
These derivations are cycles. Moreover, every derivation of degree $-5$ is
of this form. 
Since
$
\Der^{-6}(\wedge V,A;\phi_\varepsilon)=0
$
for degree reasons, the cycles
$\zeta_\lambda^\varepsilon$ and
$\zeta_{\lambda'}^\varepsilon$ represent distinct cohomology classes whenever
$\lambda\neq\lambda'$.

Let
$
\alpha_i^\varepsilon
\in
\pi_2(\map(X,Y;f_\varepsilon))\otimes\mathbb{Q}
$
be the element corresponding to the cohomology class represented by
$\theta_i^\varepsilon$, and let
\[
\Phi_\varepsilon:
\pi_*(\map(X,Y;f_\varepsilon))\otimes\mathbb{Q}
\longrightarrow
H^{-*}(\Der(\wedge V,A;\phi_\varepsilon))
\]
denote the standard identification.
For $\varepsilon=0$, the second-order brackets vanish as derivations.
For $\varepsilon=1$, they are non-zero but exact. Hence both third-order
Whitehead products
\[
[\alpha_1^\varepsilon,
 \alpha_2^\varepsilon,
 \alpha_3^\varepsilon]_{\mathrm{Wh}}
\]
are non-empty.
By Theorem~\ref{thm:main}, we obtain the following.

\begin{enumerate}
\item In the component of $f_0$,
$
\Phi_0
\bigl(
[\alpha_1^0,\alpha_2^0,\alpha_3^0]_{\mathrm{Wh}}
\bigr)
=
\left\{
\zeta_{-1}^0
\right\}.
$

\item In the component of $f_1$,
$
\Phi_1
\bigl(
[\alpha_1^1,\alpha_2^1,\alpha_3^1]_{\mathrm{Wh}}
\bigr)
=
\left\{
\zeta_\lambda^1
\ \middle|\
\lambda\in\mathbb{Q}
\right\}.
$
\end{enumerate}

Thus the defining systems depend on the component of the mapping space.
In the component of the constant map, a trivial defining system is available
and the third-order product has a single non-zero value. 
In the component of $f_1$, the higher components of any defining system must
be non-zero, and the product contains both zero and non-zero elements.
\end{ex}

The previous example comes from a cubic differential. The next example shows
a different behavior: the Sullivan model has only quadratic differentials, so
the direct third-order bracket vanishes. Nevertheless, suitable choices of the
higher components of a defining system give a non-zero associated third-order
bracket.

\begin{ex}\label{ex:quadratic-cp2}
Let $X=\mathbb{C}P^2$ and put
$
A=H^*(\mathbb{C}P^2;\mathbb{Q})=\mathbb{Q}[a]/(a^3)
$
with $|a|=2$.
Regard $A$ as a CDGA with zero differential. Let $Y$ be a rational space
with minimal Sullivan model
\[
        (\wedge V,d)
        =
        (\wedge(x_1,x_2,x_3,z_{12},z_{13},z_{23}),d),
\]
where $|x_i|=4$, $|z_{ij}|=7$, $dx_i=0$, and $dz_{ij}=x_ix_j$
for $1\leq i<j\leq 3$.
Define a CDGA morphism
$
\phi:\wedge V\to A
$
by
$\phi(x_i)=a^2$ and $\phi(z_{ij})=0$.
For $i=1,2,3$, define
$
\theta_i\in \Der^{-2}(\wedge V,A;\phi)
$
by
\[
        \theta_i(x_j)=
        \begin{cases}
        a & (i=j),\\
        0 & (i\neq j),
        \end{cases}
        \qquad
        \theta_i(z_{kl})=0.
\]
Then each $\theta_i$ is a cycle.
The second-order brackets are as follows. For $1\leq i<j\leq 3$,
\[
        [\theta_i,\theta_j](z_{ij})=-a^2,
        \qquad
        [\theta_i,\theta_j](z_{kl})=0
        \quad
        \text{if } \{k,l\}\neq \{i,j\}.
\]
These brackets are non-zero as derivations, but they are exact.
Define derivations $\eta_{ij}\in \Der^{-4}(\wedge V,A;\phi)$ by
\[
\begin{array}{lll}
        \eta_{12}(x_1)=\frac{1}{2},&
        \eta_{12}(x_2)=\frac{1}{2},&
        \eta_{12}(x_3)=-\frac{1}{2},\\[2mm]
        \eta_{13}(x_1)=\frac{1}{2},&
        \eta_{13}(x_2)=-\frac{1}{2},&
        \eta_{13}(x_3)=\frac{1}{2},\\[2mm]
        \eta_{23}(x_1)=-\frac{1}{2},&
        \eta_{23}(x_2)=\frac{1}{2},&
        \eta_{23}(x_3)=\frac{1}{2},
\end{array}
\]
and set $\eta_{ij}(z_{kl})=0$ for all $1\leq k<l\leq 3$.
For example,
\[
\begin{aligned}
        (\delta\eta_{12})(z_{12})
        &=
        -\eta_{12}(x_1x_2)
        =
        -\left(\frac{1}{2}+\frac{1}{2}\right)a^2
        =
        -a^2,\\
        (\delta\eta_{12})(z_{13})
        &=
        -\eta_{12}(x_1x_3)
        =
        -\left(\frac{1}{2}-\frac{1}{2}\right)a^2
        =
        0,\\
        (\delta\eta_{12})(z_{23})
        &=
        -\eta_{12}(x_2x_3)
        =
        -\left(\frac{1}{2}-\frac{1}{2}\right)a^2
        =
        0.
\end{aligned}
\]
The same calculation for $\eta_{13}$ and $\eta_{23}$ gives
$
\delta\eta_{ij}=[\theta_i,\theta_j].
$
Put
$
\theta_{ij}=-\eta_{ij}.
$
Then
$
\delta\theta_{ij}=-[\theta_i,\theta_j].
$
Hence
\[
        \Theta=
        \{\theta_1,\theta_2,\theta_3,\theta_{12},\theta_{13},\theta_{23}\}
\]
is a defining system for $\theta_1,\theta_2,\theta_3$.

The associated bracket is computed as follows. Since all non-zero differentials
in the model of $Y$ are quadratic, the direct term
$
[\theta_1,\theta_2,\theta_3]
$
is zero. Hence
\[
        [\theta_1,\theta_2,\theta_3]_\Theta
        =
        [\theta_{12},\theta_3]
        +
        [\theta_{13},\theta_2]
        +
        [\theta_1,\theta_{23}].
\]
For instance,
\[
[\theta_{12},\theta_3](z_{13})
=
-\langle\theta_{12},\theta_3\rangle_\phi(x_1x_3)
=
\frac{1}{2}a.
\]
The other values are
\[
\begin{array}{lll}
        [\theta_{12},\theta_3](z_{12})=0,&
        [\theta_{12},\theta_3](z_{13})=\frac{1}{2}a,&
        [\theta_{12},\theta_3](z_{23})=\frac{1}{2}a,\\[2mm]
        [\theta_{13},\theta_2](z_{12})=\frac{1}{2}a,&
        [\theta_{13},\theta_2](z_{13})=0,&
        [\theta_{13},\theta_2](z_{23})=\frac{1}{2}a,\\[2mm]
        [\theta_1,\theta_{23}](z_{12})=\frac{1}{2}a,&
        [\theta_1,\theta_{23}](z_{13})=\frac{1}{2}a,&
        [\theta_1,\theta_{23}](z_{23})=0.
\end{array}
\]
Therefore
\[
        [\theta_1,\theta_2,\theta_3]_\Theta(z_{12})
        =
        [\theta_1,\theta_2,\theta_3]_\Theta(z_{13})
        =
        [\theta_1,\theta_2,\theta_3]_\Theta(z_{23})
        =
        a.
\]
Thus the associated bracket is non-zero.

The resulting class is also non-zero in cohomology. Let
$
\xi\in \Der^{-6}(\wedge V,A;\phi).
$
Then, for $1\leq i<j\leq 3$, $(\delta\xi)(z_{ij}) = 0$ for degree reasons.
Hence no boundary in degree $-5$ can take the value $a$ on any $z_{ij}$. Thus
$
[\theta_1,\theta_2,\theta_3]_\Theta
$
represents a non-zero cohomology class in
$
H^{-5}(\Der(\wedge V,A;\phi)).
$

Let $f:X\to Y$ be the map represented by $\phi$, and let
$\alpha_i\in \pi_2(\map(X,Y;f))\otimes\mathbb{Q}$
be the element corresponding to the cohomology class of $\theta_i$.
Since the second-order brackets $[\theta_i,\theta_j]$ are exact, the corresponding second-order Whitehead products vanish rationally. 
Hence the third-order Whitehead product
$
        [\alpha_1,\alpha_2,\alpha_3]_{\mathrm{Wh}}
$
is non-empty. By Theorem~\ref{thm:main}, its image under the identification
\eqref{eq:adjoint-derivation-isomorphism} contains the non-zero class
represented by $[\theta_1,\theta_2,\theta_3]_\Theta$. In particular,
$[\alpha_1,\alpha_2,\alpha_3]_{\mathrm{Wh}}$ contains a non-zero element in
$
        \pi_5(\map(\mathbb{C}P^2,Y;f))\otimes\mathbb{Q}.
$
\end{ex}

\section*{Acknowledgments}

The author is grateful to Katsuhiko Kuribayashi, Shun Wakatsuki and Toshihiro Yamaguchi for their helpful comments and suggestions on an earlier version of this manuscript.
This work was partially supported by JSPS KAKENHI Grant Number JP26K06813.

\end{document}